\theoremstyle{plain}
\newtheorem{thm}{Theorem}[section]
\newtheorem{cor}[thm]{Corollary}
\newtheorem{propn}[thm]{Proposition}
\newtheorem{lem}[thm]{Lemma}
\theoremstyle{definition}
\newtheorem{ex}{Example}[section]
\numberwithin{equation}{section}
\DeclareMathOperator{\Imag}{Im}
\DeclareMathOperator{\Ad}{Ad}
\DeclareMathOperator{\Tr}{Trace}
\DeclareMathOperator{\aff}{Aff}
\DeclareMathOperator{\affo}{\mathfrak{aff}}
\newcommand{\orb}{\mathcal{O}}
\title{A Bijection Between the Adjoint and Coadjoint Orbits of a Semidirect Product}
\author{Philip Arathoon}
\date{June 2018}
\begin{document}
	\maketitle
	 \begin{abstract}
	 	We prove that there exists a geometric bijection between the sets of adjoint and coadjoint orbits of a semidirect product, provided a similar bijection holds for particular subgroups. We also show that under certain conditions the homotopy types of any two orbits in bijection with each other are the same. We apply our theory to the examples of the affine group and the Poincar\'{e} group, and discuss the limitations and extent of this result to other groups.
	 \end{abstract}

	\section*{Background and Outline}
	For a reductive Lie algebra the adjoint and coadjoint representations are isomorphic; consequently, the orbits are identical. In \cite{cush77} a method is devised to obtain normal forms for the adjoint orbits of any semisimple Lie algebra, real or complex. These methods are then extended in \cite{cush06} and applied to the Poincar\'{e} group. The Poincar\'{e} group is an example of a non-reductive group, and consequently there is no reason, in general, to expect any relation between the adjoint and coadjoint representations. Nevertheless, in \cite{cush06} a ``curious bijection'' is found between the normal forms of both representations. 
	
	Before proceeding any further with the details, it might be pertinent to exhibit a hands-on example of what we mean by such a bijection. In Figure~\ref{aff1} we illustrate the orbits of the group \(\aff(1)\) of affine isomorphisms of the real line. The adjoint and coadjoint representations are not isomorphic, indeed, the orbits are different; and yet, there seems to be some sort of bijection between the two. In our work in \cite{paper1} we explore in detail the orbits of the Euclidean group and prove such a bijection result. However, for other semidirect products, such as the Poincar\'{e} group, our methods no longer apply. Thus the purpose of this paper is to prove a bijection result for a wider class of semidirect products.

The study of coadjoint orbits, particularly those of a semidirect product, is a large and venerable subject; and one which we will mostly be able to sidestep. For a greater insight into the physical significance and applications of this study, consult \cite{baguis,bigstages,particles,semistages,stern}, to name but a few.

In the first section we prove our central result: that their exists a geometric bijection between the sets of adjoint and coadjoint orbits of a semidirect product provided a similar bijection holds for particular subgroups. To be precise, for a semidirect product \(G=H\ltimes V\) the particular subgroups in question are: the Wigner little groups \(H_p\subset H\) which are the stabilisers of a vector \(p\in V^*\); and the centralizer subgroups \(H_\omega\subset H\) which are the stabilisers of an element \(\omega\) in the Lie algebra of \(H\). We prove that the bijection result holds if: there is a bijection between the sets of adjoint and coadjoint orbits of the groups \(H_p\); and a bijection between the sets of orbits of \(H_\omega\) with respect to a particular representation and its contragredient. Thus, the task of establishing an orbit bijection is reduced to a similar task, albeit for a collection of `smaller' groups. 

In the second and third sections we demonstrate this bijection for the examples of the affine group of isomorphims of affine space, and the Poincar\'{e} group of affine linear maps preserving Minkowski space. The methods used in both examples are the same, however the exposition is more straightforward for the affine group. Therefore, the affine group is presented first and the Poincar\'{e} group second, following closely the template laid out by the affine group's example. In both cases the hardest part of the proof is proving the bijection result for the centralizer subgroups.

Our fourth section demonstrates a method for proving that two orbits in bijection with each other are homotopy equivalent. This relies on showing that the bijected orbits corresponding to the little subgroups and centralizer group orbits satisfy a property which we call being \emph{zigzag related}. This is an equivalence relation defined on homogeneous spaces which is stronger than that of being homotopy equivalent. Using this method we show that an adjoint and coadjoint orbit of the Poincar\'{e} group corresponding under the bijection are homotopy equivalent to each other.

We end with some remarks concerning the wider applicability of our methods.

\section{A bijection between orbits}
\subsection{The coadjoint orbits}

Let \(H\) be some Lie group, \(V\) a representation of this group, and \(G\) the semidirect product \(H\ltimes V\). The dual \(\mathfrak{g}^*\) is canonically isomorphic to \(\mathfrak{h}^*\times V^*\) and the coadjoint action given by \cite{rawnsley}
\begin{equation}
\label{coad_action}
\Ad_{(r,d)}^*(L,p)=\left(\Ad_r^*L+\mu(r^*p,d),r^*p\right).
\end{equation}
Here \((r,d)\in G\) and \((L,p)\in\mathfrak{h}^*\times V^*=\mathfrak{g}^*\). The map \(\mu\colon V^*\times V\longrightarrow\mathfrak{h}^*\) is defined by
\[
\langle \mu(p,v),\omega\rangle=\langle p,\omega v\rangle
\]
for all \(\omega\in\mathfrak{h}\). The subgroup \(H_p=\{r~|~r^*p=p\}\) is referred to in the literature as the \emph{little group}. The Lie algebra \(\mathfrak{h}_p\) of this group satisfies \(\mu(p,V)=\mathfrak{h}_p^\circ\), where \(\mathfrak{h}_p^\circ\) denotes the annihilator of the subalgebra.  We now reproduce the result given in \cite{rawnsley} which effectively parametrises the orbits in \(\mathfrak{g}^*\) by an orbit, say through \(p\) in \(V^*\), together with a \emph{little-group orbit} in \(\mathfrak{h}_p^*\).

Consider the set
\begin{equation}\label{Pi_defn}
\Pi=\left\{(l,p)~|~l\in\mathfrak{h}_p^*,~p\in V^*   \right\},
\end{equation}
and a coadjoint orbit \(\orb^*\) in \(\mathfrak{g}^*\). There is a map \(\mathfrak{g}^*\longrightarrow\Pi\) given by sending \((L,p)\) to \((\iota^*_pL,p)\), where \(\iota_p^*\) is the canonical projection of \(\mathfrak{h}^*\) onto \(\mathfrak{h}_p^*\). Let \(Y\) denote the image of \(\orb^*\) under this map. We may define an action of \(H\) on \(\Pi\) by setting \(r(l,p)=(rl,r^*p)\), where \(rl\) is the form in \(\mathfrak{h}_{rp}^*\) given by satisfying 
\[
\langle rl,\omega\rangle=\langle l,\Ad_{r^{-1}}\omega\rangle
\]
for all \(\omega\in\mathfrak{h}_{rp}\). Since \(\iota^*_p\colon\mathfrak{h}^*\longrightarrow\mathfrak{h}^*_p\) commutes with the action of \(H_p\) and has kernel \(\ker\iota^*_p=\mathfrak{h}_p^\circ\), the space \(Y\) is an orbit of \(H\) in \(\Pi\), and the map \(\orb^*\longrightarrow Y\) becomes an \(H\)-equivariant bundle with affine fibres \(\mathfrak{h}_p^\circ\). The space \(Y\) is also an \(H\)-equivariant bundle over an orbit in \(V^*\) given by projecting \((l,p)\) onto the second factor. The fibre of this projection above \(p\) is the coadjoint orbit through \(l\in\mathfrak{h}_p^*\), the so-called \emph{little-group orbit}. Conversely, given an orbit \(Y\) in \(\Pi\) there exists a coadjoint orbit \(\orb^*\subset\mathfrak{g}^*\) which is mapped to \(Y\). For \((l,p)\in Y\) the corresponding coadjoint orbit is that through \((L,p)\), where \(L\) is any element with \(\iota_p^*L=l\). In the literature the orbits of \(\Pi\) are referred to as \emph{bundles of little-group orbits}.
\begin{thm}[\cite{rawnsley}]\label{rawns}
	There is a bijection between the set of coadjoint orbits of \(G\) and the set of orbits of \(\Pi\). Given an orbit \(\orb^*\) and corresponding bundle \(Y\), there is a \(G\)-equivariant affine bundle \(\orb^*\rightarrow Y\).
\end{thm}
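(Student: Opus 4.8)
The plan is to realise the correspondence through the projection $\pi\colon\mathfrak{g}^*\to\Pi$, $(L,p)\mapsto(\iota_p^*L,p)$ introduced above, and to show that it descends to a bijection on orbit sets while exhibiting each coadjoint orbit as an affine bundle over its image. The first step is to confirm that $\pi$ is equivariant for the coadjoint action of \(G\) and the action of \(H\) on \(\Pi\), with \(V\) acting trivially on \(\Pi\). For the \(V\)-factor, \(\Ad^*_{(e,d)}(L,p)=(L+\mu(p,d),p)\) by \eqref{coad_action}, and since \(\mu(p,V)=\mathfrak{h}_p^\circ=\ker\iota_p^*\) the correction \(\mu(p,d)\) is annihilated by \(\iota_p^*\); hence \(\pi\) is constant along \(V\)-orbits. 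For the \(H\)-factor, pairing \(\iota_{r^*p}^*\Ad_r^*L\) against an arbitrary \(\omega\in\mathfrak{h}_{r^*p}\) and using that \(\Ad_{r^{-1}}\omega\in\mathfrak{h}_p\) gives \(\pi(\Ad^*_{(r,0)}(L,p))=r\cdot\pi(L,p)\). Surjectivity of \(\pi\) is immediate, since \(\iota_p^*\) is the dual of the inclusion \(\mathfrak{h}_p\hookrightarrow\mathfrak{h}\) and so is onto.

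Next I would identify the fibres of \(\pi\). The fibre over \((l,p)\) is the affine subspace \(\{(L,p)\mid\iota_p^*L=l\}\), a torsor modelled on \(\ker\iota_p^*=\mathfrak{h}_p^\circ\). The key observation is that this fibre is exactly one \(V\)-orbit: the \(V\)-action translates \(L\) by \(\mu(p,d)\), and as \(d\) ranges over \(V\) these translations fill out \(\mu(p,V)=\mathfrak{h}_p^\circ\), so the \(V\)-orbit through any point of the fibre is the whole fibre.

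With these two facts the orbit correspondence is formal. By equivariance and surjectivity, \(\pi\) maps a coadjoint orbit \(\orb^*\) onto a single \(H\)-orbit \(Y=\pi(\orb^*)\) of \(\Pi\). Conversely \(\pi^{-1}(Y)\) is a single coadjoint orbit: given two of its points, first use the transitivity of \(H\) on \(Y\), lifted through equivariance, to bring one into the \(\pi\)-fibre of the other, and then use the transitivity of the \(V\)-action on that fibre to identify them. Thus \(Y\mapsto\pi^{-1}(Y)\) is a two-sided inverse to \(\orb^*\mapsto\pi(\orb^*)\), which is the asserted bijection. Restricting \(\pi\) to a single orbit then presents \(\orb^*\to Y\) as a \(G\)-equivariant bundle whose fibres are the affine spaces \(\mathfrak{h}_p^\circ\), the affine structure being furnished by the transitive \(V\)-action established above.

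I expect the genuine difficulty to be bookkeeping rather than any single hard estimate: the little algebra \(\mathfrak{h}_p\), and therefore the fibre \(\mathfrak{h}_p^*\) of \(\Pi\) over \(V^*\), varies with \(p\), so one must keep the projections \(\iota_p^*\), the twisted \(H\)-action \(rl\in\mathfrak{h}_{rp}^*\), and the identity \(\mu(p,V)=\mathfrak{h}_p^\circ\) mutually compatible as \(p\) moves. At the level of orbit sets the equivariance computation controls all of this; the more delicate point is to verify that \(\Pi\) and \(\pi\) are smooth in a way that makes \textbf{affine bundle} meaningful in the differentiable category, rather than merely set-theoretically.
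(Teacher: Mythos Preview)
Your proposal is correct and matches the paper's approach: the paper does not give a separate proof of this theorem (it is attributed to Rawnsley), but the paragraph preceding the statement sketches exactly the construction you carry out---the projection \((L,p)\mapsto(\iota_p^*L,p)\), the identification \(\ker\iota_p^*=\mathfrak{h}_p^\circ=\mu(p,V)\) making each fibre an affine \(V\)-orbit, and the inverse map taking \((l,p)\in Y\) to the coadjoint orbit through any lift \((L,p)\). Your write-up simply fills in the equivariance check and the two-sided inverse argument that the paper leaves implicit.
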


\subsection{The adjoint orbits}

We can adapt the bundle-of-little-group-orbits construction to the adjoint action, and obtain an analogous classification of orbits in \(\mathfrak{g}\). For \((\omega,v)\in\mathfrak{g}\) the adjoint action is \cite[Section 19]{stern}
\begin{equation}\label{Ad_action}
\Ad_{(r,d)}(\omega,v)=\left(\Ad_r\omega,rv-\left(\Ad_r\omega\right)d\right).
\end{equation}
The isotropy subgroup \(H_\omega=\{r~|~\Ad_r\omega=\omega\}\) is called the \emph{centralizer group}.
\begin{lem}\label{lemma}
	There is a canonical isomorphism between the quotient space \(V/\Imag\omega\) and \(\left(\ker\omega^*\right)^*\) (here \(\omega^*\) denotes the adjoint of the linear map \(\omega\)). Furthermore, this is an intertwining map for the representations of \(H_\omega\) on these spaces.
\end{lem}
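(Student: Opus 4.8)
The plan is to realise the asserted isomorphism as coming directly from the canonical pairing between \(V\) and \(V^*\), so that everything is manifestly natural. I would define the bilinear form \(V\times\ker\omega^*\longrightarrow\mathbb{R}\) by \((v,p)\mapsto\langle p,v\rangle\) and observe that it factors through \(V/\Imag\omega\): for \(p\in\ker\omega^*\) and any \(u\in V\) we have \(\langle p,\omega u\rangle=\langle\omega^*p,u\rangle=0\), so the form annihilates \(\Imag\omega\) in its first argument. This yields a well-defined candidate map \(\phi\colon V/\Imag\omega\longrightarrow(\ker\omega^*)^*\), sending \([v]\) to the functional \(p\mapsto\langle p,v\rangle\), which I claim is the canonical isomorphism.

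To prove \(\phi\) is an isomorphism I would first record the identity \(\ker\omega^*=(\Imag\omega)^\circ\), which is exactly the computation above read backwards: \(\omega^*p=0\) precisely when \(p\) kills \(\Imag\omega\). The standard finite-dimensional dualities \((V/\Imag\omega)^*\cong(\Imag\omega)^\circ\) and \(V\cong V^{**}\) then give \(\dim(\ker\omega^*)^*=\dim(\Imag\omega)^\circ=\dim V-\rk\omega=\dim(V/\Imag\omega)\), so it suffices to show \(\phi\) is injective. If \(\phi([v])=0\) then \(\langle p,v\rangle=0\) for every \(p\in(\Imag\omega)^\circ\), hence \(v\in\bigl((\Imag\omega)^\circ\bigr)^\circ=\Imag\omega\) and \([v]=0\). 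Thus \(\phi\) is a canonical linear isomorphism.

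For the intertwining claim I would start from the observation that, for \(r\in H_\omega\), the condition \(\Ad_r\omega=\omega\) says exactly that the operator by which \(r\) acts on \(V\) commutes with the operator by which \(\omega\) acts on \(V\), since \(r\,\omega\,r^{-1}\) is the operator of \(\Ad_r\omega=\omega\). Consequently \(r\) preserves \(\Imag\omega\), so \(H_\omega\) acts on \(V/\Imag\omega\); transposing, the contragredient action on \(V^*\) commutes with \(\omega^*\) and preserves \(\ker\omega^*\), so \(H_\omega\) acts on \((\ker\omega^*)^*\). The equivariance of \(\phi\) is then cleanest to phrase through the pairing itself: the canonical pairing \(\langle p,v\rangle\) is invariant under the diagonal \(H\)-action on \(V\times V^*\) (the given representation on \(V\), its contragredient on \(V^*\)), so the induced nondegenerate pairing \(V/\Imag\omega\times\ker\omega^*\to\mathbb{R}\) is \(H_\omega\)-invariant. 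A nondegenerate \(H_\omega\)-invariant pairing between two modules is precisely the datum of an equivariant isomorphism of one with the dual of the other, and that isomorphism is \(\phi\).

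I do not expect a genuine obstacle here, as the argument is elementary linear algebra packaged equivariantly; the two points that require care are matching the duality conventions so that \((\ker\omega^*)^*\) carries the correct contragredient representation and the pairing is genuinely \(H_\omega\)-invariant, and the appeal to reflexivity \(V\cong V^{**}\) together with \(\bigl((\Imag\omega)^\circ\bigr)^\circ=\Imag\omega\), which is what confines the clean statement to the finite-dimensional \(V\) arising in the affine and Poincar\'e examples.
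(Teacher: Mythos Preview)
Your argument is correct and is essentially the explicit unpacking of the paper's one-line proof, which simply dualizes the \(H_\omega\)-equivariant exact sequence \(\ker\omega^*\hookrightarrow V^*\xrightarrow{\omega^*}V^*\) to obtain \(V\xrightarrow{\omega}V\to(\ker\omega^*)^*\) and reads off \(V/\Imag\omega\cong(\ker\omega^*)^*\). Your pairing construction, the identity \(\ker\omega^*=(\Imag\omega)^\circ\), and the invariance of the canonical pairing are precisely what ``dualizing the exact sequence'' means in concrete terms, so the approaches coincide.
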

\begin{proof}
	The result follows by dualizing the exact sequence, \(\ker\omega^*\hookrightarrow V^*\xrightarrow{\omega^*}V^*\), whose arrows all commute with \(H_\omega\).
\end{proof}
Consider the set
\begin{equation}\label{Sigma_defn}
\Sigma=\left\{(\omega,x)~|~\omega\in\mathfrak{h},~x\in\left(\ker\omega^*\right)^*\right\}.
\end{equation}
There is a map \(\mathfrak{g}\longrightarrow\Sigma\) which sends \((\omega,v)\) to \((\omega,x)\), where \(x\) is the element mapped from \([v]\in V/\Imag\omega\)  under the isomorphism in Lemma~\ref{lemma}. Now let \(\orb\) be an adjoint orbit and let \(X\) denote the image of this orbit under the map into \(\Sigma\). In the same way as we did for the coadjoint orbits, we can equip \(\Sigma\) with an \(H\)-action and establish \(\orb\longrightarrow X\) as an \(H\)-equivariant bundle with fibres \(\Imag\omega\). The space \(X\) is itself an \(H\)-equivariant bundle over an adjoint orbit through, say \(\omega\) in \(\mathfrak{h}\), with fibre above \(\omega\) equal to an orbit of \(H_\omega\) in \(\left(\ker\omega^*\right)^*\), what we shall call a \emph{centralizer group orbit}. In the same way that Theorem~\ref{rawns} is proven, we can establish an analogous theorem.
\begin{thm}
	There is a bijection between the set of adjoint orbits of \(G\) and the set of orbits of \(\Sigma\). Given an orbit \(\orb\) and corresponding bundle \(X\), there is a \(G\)-equivariant affine bundle \(\orb\rightarrow X\).
\end{thm}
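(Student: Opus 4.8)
The plan is to transcribe the proof of Theorem~\ref{rawns} almost line by line, the key observation being that Lemma~\ref{lemma} makes \(\Imag\omega\) play exactly the role that the annihilator \(\mathfrak{h}_p^\circ=\ker\iota_p^*\) played in the coadjoint case. First I would define the \(H\)-action on \(\Sigma\) using the quotient model \(V/\Imag\omega\), in which the action is transparent. Since the infinitesimal representation satisfies \(\Ad_r\omega=r\circ\omega\circ r^{-1}\) as a linear map on \(V\), we have \(\Imag(\Ad_r\omega)=r(\Imag\omega)\), so \(r\) descends to an isomorphism \(V/\Imag\omega\to V/\Imag(\Ad_r\omega)\), \([v]\mapsto[rv]\). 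Transporting this across the identification of Lemma~\ref{lemma} defines \(r\cdot x\in(\ker(\Ad_r\omega)^*)^*\), and hence the action \(r(\omega,x)=(\Ad_r\omega,\,r\cdot x)\); the map \(\mathfrak{g}\to\Sigma\) is then simply the composite of \((\omega,v)\mapsto(\omega,[v])\) with this identification.

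Next I would check that \(\mathfrak{g}\to\Sigma\) intertwines the adjoint action of \(G\) with the \(H\)-action on \(\Sigma\), where the normal subgroup \(V\) acts trivially. Applying \(\Ad_{(r,d)}\) sends \((\omega,v)\) to \((\Ad_r\omega,\,rv-(\Ad_r\omega)d)\), and the translation term \((\Ad_r\omega)d\) lies in \(\Imag(\Ad_r\omega)\), so it vanishes in the quotient \(V/\Imag(\Ad_r\omega)\) and the image is \((\Ad_r\omega,[rv])=r(\omega,[v])\). This is precisely dual to the observation underlying Theorem~\ref{rawns}, that \(\mu(r^*p,d)\in\mathfrak{h}_p^\circ=\ker\iota^*_p\). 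Equivariance then immediately gives that the image of an adjoint orbit \(\orb=G\cdot(\omega,v)\) is the single \(H\)-orbit \(X=H\cdot(\omega,[v])\), and that \(\orb\to X\) is \(G\)-equivariant.

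For the bundle structure I would compute the fibre. Acting by a pure translation \((e,d)\) gives \((\omega,v)\mapsto(\omega,v-\omega d)\), so above a given \((\omega,x)\) the orbit contains the entire affine subspace \(v+\Imag\omega\); conversely any point of \(\orb\) over \((\omega,x)\) has \(V\)-component congruent to \(v\) modulo \(\Imag\omega\). Hence the fibre is exactly \(v+\Imag\omega\), exhibiting \(\orb\to X\) as a \(G\)-equivariant affine bundle with fibre \(\Imag\omega\), mirroring the affine fibre \(\mathfrak{h}_p^\circ\) of Theorem~\ref{rawns}.

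Finally, for the converse and the bijection, given an orbit \(X\subset\Sigma\) I would pick \((\omega,x)\in X\), lift \(x\) through the surjection \(V\twoheadrightarrow V/\Imag\omega\cong(\ker\omega^*)^*\) to some \(v\), and set \(\orb=G\cdot(\omega,v)\); its image is \(H\cdot x=X\). The fibre description shows these two assignments are mutually inverse, so the bijection follows. The main obstacle I anticipate lies entirely in the first two steps — verifying that the \(H\)-action on \(\Sigma\) is well defined and that \(\mathfrak{g}\to\Sigma\) is equivariant once the quotient has been transported across Lemma~\ref{lemma}. Once the translation term is seen to die in the quotient exactly as \(\mu(r^*p,d)\) dies under \(\iota^*_p\), the remainder is a formal copy of the coadjoint argument.
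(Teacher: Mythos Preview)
Your proposal is correct and matches the paper's own treatment exactly: the paper does not give an independent proof but simply states that the theorem is established ``in the same way that Theorem~\ref{rawns} is proven,'' and your write-up is precisely that transcription, with \(\Imag\omega\) playing the role of \(\mathfrak{h}_p^\circ\) and the translation term \((\Ad_r\omega)d\) dying in the quotient just as \(\mu(r^*p,d)\) dies under \(\iota_p^*\). There is nothing to add; the fibre identification \(v+\Imag\omega\) and the equivariance check are exactly the points the paper alludes to in the paragraph preceding the theorem.
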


\subsection{Constructing the bijection}
Suppose we have an action of a group \(G\) on \(X\), and of \(H\) on \(Y\). We will not give a precise meaning to the existence of a `geometric orbit bijection' between the two actions. For us an \emph{orbit bijection} will merely mean a bijection between the sets of orbits in \(X\) with orbits in \(Y\). This is a weak notion and does not capture the `geometric' sense of an orbit bijection as that given in the example from Figure~\ref{aff1}. To justify more rigorously what a `geometric' orbit bijection might mean would be a digressive and ultimately unnecessary exercise; the geometric nature of our bijections (whatever that may mean) will be clear from the construction we now give and from the examples to follow.

As we have seen, there is a bijection between the set of adjoint orbits of \(G\) with the set of \(H\)-orbits in \(\Sigma\), and a bijection between the coadjoint orbits of \(G\) with the \(H\)-orbits in \(\Pi\). Our strategy for showing an adjoint and coadjoint orbit bijection will be to exhibit a space \(\Delta\) equipped with an action of \(H\) for which there is an orbit bijection with both \(\Pi\) and \(\Sigma\).

Consider the diagonal action of \(H\) on the product \(\mathfrak{h}\times V^*\). We introduce the \(H\)-invariant subset \(\Delta\subset\mathfrak{h}\times V^*\) given by the three equivalent definitions
\begin{equation}\label{Delta_defn}
\Delta\colon=\left\{(\omega,p)~|~\omega^*p=0\right\}=\left\{(\omega,p)~|~\omega\in\mathfrak{h}_p\right\}=\left\{(\omega,p)~|~p\in\ker\omega^* \right\}.
\end{equation}
Observe that any orbit in \(\Delta\) is two different \(H\)-equivariant bundles given by projecting onto either the first or second factor. On the one hand, an orbit is a bundle over an orbit through \(p\in V^*\) with fibre equal to the adjoint orbit through \(\omega\in\mathfrak{h}_p\). On the other hand, it is also a bundle over the adjoint orbit through \(\omega\in\mathfrak{h}\) with fibre equal to the \(H_\omega\)-orbit through \(p\in\ker\omega^*\).
\begin{thm}[Orbit bijection]\label{orb_bijection}
	Suppose that for any \(p\in V^*\) there exists a bijection between the set of adjoint and coadjoint orbits of \(H_p\). Additionally, suppose there is a bijection between the set of \(H_\omega\)-orbits on \(\ker\omega^*\) with the set of \(H_\omega\)-orbits of the contragredient representation on \((\ker\omega^*)^*\). Then there exists a bijection between the set of orbits in \(\Delta\), with each of the sets of adjoint and coadjoint orbits of \(G\).
\end{thm}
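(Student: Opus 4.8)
The plan is to exhibit the space \(\Delta\) as an intermediary that admits an orbit bijection with each of \(\Pi\) and \(\Sigma\), and then to compose these with the two classification theorems already established. Concretely, I would prove an orbit bijection between the \(H\)-action on \(\Delta\) and that on \(\Pi\), and a second orbit bijection between \(\Delta\) and \(\Sigma\); chaining the first with Theorem~\ref{rawns} and the second with its adjoint analogue then yields, simultaneously, a bijection of the orbits in \(\Delta\) with the coadjoint orbits of \(G\) and with the adjoint orbits of \(G\).

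The engine of both orbit bijections is a single elementary fact about equivariant bundles, which I would isolate first. Suppose \(E\) carries an \(H\)-action and \(\phi\colon E\to B\) is an \(H\)-equivariant surjection onto a single orbit \(B=H\cdot b_0\), with fibre \(F=\phi^{-1}(b_0)\) preserved by the stabiliser \(K=\stab_H(b_0)\). Then the inclusion \(F\hookrightarrow E\) induces a bijection between the set of \(K\)-orbits in \(F\) and the set of \(H\)-orbits in \(E\). Surjectivity holds because every \(H\)-orbit meets \(F\): any \(e\in E\) satisfies \(\phi(e)=h\,b_0\) for some \(h\), whence \(h^{-1}e\in F\). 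Injectivity holds because, for \(f_1,f_2\in F\), the relation \(f_2=h f_1\) forces \(h\,b_0=b_0\), i.e.\ \(h\in K\); thus \(f_1,f_2\) share an \(H\)-orbit precisely when they share a \(K\)-orbit.

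I would then apply this fact twice. Projecting \(\Delta\) and \(\Pi\) onto the second factor \(V^*\) realises both as \(H\)-equivariant bundles over \(V^*\), and both surject onto the full set of \(H\)-orbits in \(V^*\) (take \(\omega=0\), respectively \(l=0\)). Fixing a representative \(p\) of each orbit in \(V^*\), the fibre of \(\Delta\) over \(p\) is \(\mathfrak{h}_p\) with its adjoint \(H_p\)-action, while the fibre of \(\Pi\) over \(p\) is \(\mathfrak{h}_p^*\) with its coadjoint \(H_p\)-action. By the bundle fact, the \(H\)-orbits of \(\Delta\) (resp.\ \(\Pi\)) lying over the orbit of \(p\) correspond to the adjoint (resp.\ coadjoint) orbits of \(H_p\); the first hypothesis supplies a bijection between these, and taking the disjoint union over the orbits in \(V^*\) assembles the orbit bijection \(\Delta\leftrightarrow\Pi\). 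Projecting instead onto the first factor \(\mathfrak{h}\) runs the identical argument: \(\Delta\) and \(\Sigma\) become \(H\)-equivariant bundles over \(\mathfrak{h}\) surjecting onto every adjoint orbit (take \(p=0\), respectively \(x=0\)), the fibres over \(\omega\) are \(\ker\omega^*\) and \((\ker\omega^*)^*\) with their \(H_\omega\)-actions, and the second hypothesis gives the fibrewise orbit bijection, which assembles into \(\Delta\leftrightarrow\Sigma\).

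There is no deep obstacle here: the mathematical content lies almost entirely in the bookkeeping. The point demanding care is that the hypotheses assert only the \emph{existence} of fibrewise bijections, not canonical or \(H\)-equivariant ones, so I must commit to a single representative \(p\) (resp.\ \(\omega\)) in each base orbit and use the chosen bijection there, verifying that the induced map on orbit sets is well defined once the bundle over that base orbit is identified with \(H\times_{H_p}\mathfrak{h}_p\) (resp.\ the analogous associated bundle). Because the base-orbit sets of \(\Delta\), \(\Pi\), and \(\Sigma\) coincide — all \(H\)-orbits in \(V^*\) in the first case, all adjoint orbits in \(\mathfrak{h}\) in the second — these fibrewise bijections glue automatically as a disjoint union over base orbits, with no compatibility between distinct base orbits required. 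Composing \(\Delta\leftrightarrow\Pi\) with Theorem~\ref{rawns} and \(\Delta\leftrightarrow\Sigma\) with the adjoint classification theorem then completes the proof.
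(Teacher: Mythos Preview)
Your argument is correct and follows the same architecture as the paper's: realise both \(\Delta\) and \(\Pi\) as \(H\)-equivariant bundles over \(V^*\), identify their fibres over \(p\) with \(\mathfrak{h}_p\) and \(\mathfrak{h}_p^*\), and match orbits fibrewise using the hypothesis (and dually for \(\Sigma\) via the projection to \(\mathfrak{h}\)). The one genuine difference is how you handle well-definedness. The paper worries that choosing a different point \(rp\) in the same \(V^*\)-orbit might yield an inconsistent bijection, and so it \emph{imposes} mid-proof an additional ``consistency'' condition on the family of \(H_p\)-bijections (namely that the bijection for \(H_{rp}\) be the \(\Ad_r\)-transport of that for \(H_p\)). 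You sidestep this entirely by first isolating the general fact that for any equivariant surjection onto a single orbit, the \(H\)-orbits upstairs biject with the stabiliser-orbits in a \emph{fixed} fibre; then you simply commit to one representative \(p\) per base orbit and apply the hypothesised bijection only there. This is cleaner and, strictly speaking, proves the theorem as stated without the extra assumption the paper slips in. What the paper's consistency condition buys in return is a bijection that is manifestly independent of the choice of representative and hence more transparently ``geometric'', which matters for the later homotopy arguments in Section~4.
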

\begin{proof}
	Any coadjoint orbit of \(G\) uniquely determines an orbit \(Y\) through, say \((l,p)\) in \(\Pi\). The space \(Y\) is a bundle over the orbit through \(p\in V^*\) whose fibre over \(p\) is the coadjoint orbit through \(l\in\mathfrak{h}_p^*\). Contrast this with an orbit through \((\omega,p)\) in \(\Delta\): a bundle over the orbit through \(p\in V^*\) whose fibre over \(p\) is an adjoint orbit in \(\mathfrak{h}_p\). Since there is a bijection between adjoint and coadjoint orbits of \(H_p\), let \(\omega\) be any element belonging to the adjoint orbit which is in bijection with the coadjoint orbit through \(l\). We designate the orbit \(Z\) through \((\omega,p)\) to correspond to the coadjoint orbit we selected at the beginning.
	
	This correspondence currently depends on which point \(p\) we select in the orbit through \(V^*\). For instance, had we taken the point \((rl,rp)\in Y\) instead, then the bijection between adjoint and coadjoint orbits of \(H_{rp}\) may not result in the same choice of designated orbit in \(\Delta\) as it did for \(H_p\). To ward against this we make an additional assumption about our bijections. For a given \(p\in V^*\) let \(\orb\) and \(\orb^*\) be orbits in \(\mathfrak{h}_p\) and \(\mathfrak{h}_p^*\) respectively which are in bijection with each other. We insist that for any \(r\in H\) the bijection between adjoint and coadjoint orbits of \(H_{rp}\) is given by bijecting the adjoint orbit \(\Ad_r\orb\) with the coadjoint orbit \(\Ad_{r^{-1}}^*\orb^*\) (note that \(\Ad_r\colon\mathfrak{h}_p\longrightarrow\mathfrak{h}_{rp}\) is an isomorphism).
	
	With this assumption on the `consistency' of the \(H_p\)-orbit bijections, the correspondence we described above no longer depends on the point \((l,p)\in Y\), but instead only depends on the orbit \(Y\) itself. In this way we define a bijection between the orbits of \(\Pi\) with the orbits of \(\Delta\).
	
	The proof for the adjoint orbits is analogous so we will merely sketch it. It suffices to define a bijection between orbits in \(\Sigma\) with those in \(\Delta\). Orbits in both these spaces are bundles over adjoint orbits in \(\mathfrak{h}\) but whose fibres are either \(H_\omega\)-orbits in \(\ker\omega^*\) or its dual. Making similar assumptions about the \(H_\omega\)-bijections being `consistent' we may replicate the construction given for the coadjoint orbits to define an orbit-to-orbit bijection between \(\Sigma\) and \(\Delta\).
\end{proof}

\section{The affine group}
\subsection{Preliminaries}
The \emph{affine group} \(\aff(V)\) of a vector space \(V\) is the group of affine linear transformations of \(V\). It is a semidirect product
\[
\aff(V)=GL(V)\ltimes V
\]
with respect to the defining representation of \(GL(V)\). If we choose a basis for \(V\) then this group is isomorphic to the matrix group
\begin{equation}\label{rows}
\left\{
\begin{pmatrix}
r & d \\ 0 & 1
\end{pmatrix}~\Big|~ r\in GL(V),~d\in V
\right\}.
\end{equation}
If we identify \(V\) with its dual using the standard inner product corresponding to our choice of basis, then the group \(\aff(V^*)\) is isomorphic to
\begin{equation}\label{column}
\left\{
\begin{pmatrix}
r^{-1} & 0 \\ d^T & 1
\end{pmatrix}~\Big|~ r\in GL(V),~d\in V
\right\}.
\end{equation}
For when \(V=\mathbb{R}^n\) we write \(\aff(V)=\aff(n)\) and \(\aff(V^*)=\aff(n^*)\). Observe that the Lie algebra \(\mathfrak{aff}(n)\) is isomorphic to the set of \((n+1)\) square matrices whose final row consists of zeros, and \(\mathfrak{aff}(n^*)\) to the set of \((n+1)\) square matrices whose final column consists of zeros.
\begin{propn}\label{aff_stab}
	For \(v\in \mathbb{R}^{n+1}\) non-zero we have that the isotropy subgroup \(GL(n+1)_v\) is isomorphic to \(\aff(n^*)\). Similarly, for \(p\) a non-zero linear functional on \(\mathbb{R}^{n+1}\), the subgroup \(GL(n+1)_p\) is isomorphic to \(\aff(n)\).
\end{propn}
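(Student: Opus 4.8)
The plan is to exploit the transitivity of the $GL(n+1)$-action. Since $GL(n+1)$ acts transitively on the nonzero vectors of $\mathbb{R}^{n+1}$, and, through the contragredient representation, transitively on the nonzero functionals, any two isotropy subgroups of nonzero vectors are conjugate, and likewise for functionals. Conjugate subgroups are isomorphic, so it suffices to compute $GL(n+1)_v$ and $GL(n+1)_p$ for a single convenient representative in each case; I would take $v=e_{n+1}$ and $p=e_{n+1}^*$, the final standard basis vector and its dual coordinate functional.

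For the vector $v=e_{n+1}$, the requirement $r\in GL(n+1)_v$, that is $re_{n+1}=e_{n+1}$, says precisely that the final column of $r$ equals $e_{n+1}$. Writing $r$ in block form according to the splitting $\mathbb{R}^{n+1}=\mathbb{R}^n\oplus\mathbb{R}$, this forces
\[
r=\begin{pmatrix} A & 0 \\ b^T & 1\end{pmatrix},\qquad A\in GL(n),\ b\in\mathbb{R}^n,
\]
where the block-triangular form gives $\det r=\det A$, so that invertibility of $r$ is equivalent to $A\in GL(n)$. As $A$ ranges over $GL(n)$ and $b$ over $\mathbb{R}^n$, this is exactly the set of matrices appearing in the realization \eqref{column} of $\aff(n^*)$ (take $A=r^{-1}$, a bijection of $GL(n)$). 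Hence $GL(n+1)_{e_{n+1}}$ coincides, as a subgroup of $GL(n+1)$, with the matrix realization of $\aff(n^*)$, and the two are therefore isomorphic.

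The functional case is dual and handled the same way: for $p=e_{n+1}^*$ the condition $r\in GL(n+1)_p$ forces the final row of $r$ to equal $e_{n+1}^*$, giving the block form $\begin{pmatrix} A & b \\ 0 & 1\end{pmatrix}$ with $A\in GL(n)$ and $b\in\mathbb{R}^n$, which is precisely the realization \eqref{rows} of $\aff(n)$. I do not anticipate a genuine obstacle here: the whole argument is a single block-matrix computation, and the only points demanding care are keeping the row/column conventions of \eqref{rows} and \eqref{column} straight and observing that these identifications are literal equalities of matrix subgroups, so that the claimed isomorphisms are simply the inclusions and require no separate verification of multiplicativity.
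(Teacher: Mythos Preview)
Your proposal is correct and follows essentially the same approach as the paper: reduce by transitivity to the final basis vector (and its dual functional), then read off the stabilizer as the block-triangular matrices in \eqref{column} and \eqref{rows}. You are simply more explicit about the block-matrix computation than the paper, which is fine.
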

\begin{proof}
	Since \(GL(n+1)\) acts transitively on non-zero vectors we may suppose without loss of generality that \(v\) is equal to the final basis vector for our choice of basis. The subgroup which preserves this vector is then precisely that given by equation~\eqref{column}. The dual action on \(\mathbb{R}^{n+1}\) is given by right multiplication on row vectors. The stabiliser of the final basis element now corresponds to matrices as in equation~\eqref{rows}.
\end{proof}
\begin{ex}[Orbits of $\aff(1)$]\label{aff1_ex}
	Denote group elements by \((r,d)\in GL(1)\ltimes\mathbb{R}^1=\aff(1)\) and Lie algebra elements by \((\omega,v)\in\mathfrak{gl}(1)\times\mathbb{R}^1=\mathbb{R}\times\mathbb{R}\). We will identify \(\mathfrak{aff}(1)\) with its dual by setting \(\langle (L,p),(\omega,v)\rangle=L\omega+pv\). It can be shown that the adjoint action (see \eqref{Ad_action}) is given by 
	\[
	\Ad_{(r,d)}(\omega,v)=\left(\omega,rv-\omega d\right)
	\]
	and the coadjoint action (see \eqref{coad_action}) by
	\[
	\Ad_{(r,d)}^*(L,p)=\left(L+r^{-1}pd,r^{-1}p\right).
	\]
	In Figure~\ref{aff1} we illustrate these orbits. Observe that there is indeed a `geometric' bijection between the orbits: both origins to each other, the full-line adjoint orbits to the remaining coadjoint point orbits, and the two-half-line adjoint orbit to the open, dense coadjoint orbit.
\end{ex}
\begin{figure}
	\centering
	\begin{subfigure}{0.4\textwidth}
		\caption{Adjoint orbits}
		\begin{tikzpicture}
		\path[use as bounding box] (-3.3,-3.3) rectangle (3.5,3.3);
		\draw[-Latex] (0,-3) -- (0,3)node[right]{$v$};
		\draw[-Latex] (-3,0) -- (3,0)node[right]{$\omega$};
		\draw[fill] (0,0) circle[radius=0.07];
		\draw[ultra thick] (0,0.2) -- (0,2.5);
		\draw[ultra thick] (0,-0.2) -- (0,-2.5);
		
		\foreach \x in {0.5,1,...,2.5}{
			\draw[ultra thick] (\x,-2.5) -- (\x,2.5);
			\draw[ultra thick] (-\x,-2.5) -- (-\x,2.5);	
		}
		\end{tikzpicture}
	\end{subfigure}
	\begin{subfigure}{0.4\textwidth}
		\caption{Coadjoint orbits}
		\begin{tikzpicture}
		\path[use as bounding box] (-3.3,-3.3) rectangle (3.5,3.3);
		\draw[-Latex] (0,-3) -- (0,3)node[right]{$p$};
		\draw[-Latex] (-3,0) -- (3,0)node[right]{$L$};
		\draw[fill] (0,0) circle[radius=0.07];
		\draw[draw=gray,fill=gray, fill opacity=0.4,draw opacity=0.4] (-2.5,0.2) rectangle (2.5,2.5);
		\draw[draw=gray,fill=gray, fill opacity=0.4,draw opacity=0.4] (-2.5,-0.2) rectangle (2.5,-2.5);
		\foreach \x in {0.5,1,...,2.5}{
			\draw[fill] (\x,0) circle[radius=0.07];
			\draw[fill] (-\x,0) circle[radius=0.07];
		}
		\end{tikzpicture}
	\end{subfigure}
	\caption{\label{aff1}Orbits of \(\aff(1)\).}
\end{figure}

\subsection{The centralizer group representation}
For Theorem~\ref{orb_bijection} to hold we require a bijection result for the \(H_p\)- and \(H_\omega\)-orbits. We will begin with the representation of \(H_\omega\) on \(\ker\omega^*\) and its dual. 

Let \(\Phi\) denote the representation 
\[
\Phi\colon H_\omega\longrightarrow GL(\ker\omega^*)
\]
given by restriction of \(r\in H_\omega\) to \(\ker\omega^*\). The group \(H_\omega=GL(n)_\omega\) is the subgroup of all isomorphisms of \(V\) which commute with \(\omega\). Equivalently, it is the subgroup of all isomorphisms of \(V^*\) which commute with \({\omega^*}\). Observe that \(H_\omega\) must therefore preserve the flag
\begin{equation}
\label{nilp_flag}
F_{{\omega^*}}=\left(\ker{\omega^*}\supset \Imag{\omega^*}\cap\ker{\omega^*}\supset\Imag{\omega^*}^2\cap\ker{\omega^*}\supset\dots\right).
\end{equation} 

\begin{propn}\label{flag_propn_aff}
	The group \(\Phi(H_\omega)\) is the group of all isomorphisms of \(\ker{\omega^*}\) which preserve the flag \(F_{\omega^*}\).
\end{propn}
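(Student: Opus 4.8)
The plan is to reduce the proposition to a statement about the centraliser of a single nilpotent operator and then to establish that statement by an explicit construction in a Jordan basis. Write $N=\omega^*$ for the operator on $V^*$. One inclusion is immediate: every element of $H_\omega$ commutes with $N$, hence preserves each of $\Imag N^k$ and $\ker N$, and therefore preserves each intersection $\Imag N^k\cap\ker N$; so $\Phi(H_\omega)$ is contained in the flag-stabiliser of $F_{\omega^*}$. The whole content of the proposition is the reverse inclusion: that every isomorphism of $\ker\omega^*$ preserving $F_{\omega^*}$ is the restriction of some element commuting with $N$.

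First I would strip off the invertible part of $N$. Splitting $V^*=W_0\oplus W'$ into the generalised kernel $W_0=\ker N^{\dim V^*}$ and the sum of the remaining primary components $W'$ (on which $N$ is invertible) gives two $N$-invariant summands, both preserved by the entire centraliser. Since $\ker N\subseteq W_0$ and $N$ is invertible on $W'$, a short check gives $\Imag N^k\cap\ker N=\Imag N_0^k\cap\ker N_0$ for the nilpotent restriction $N_0=N|_{W_0}$; thus both $\ker\omega^*$ and the flag $F_{\omega^*}$ are intrinsic to $N_0$. It therefore suffices to realise any flag-preserving map as the restriction of an element of the centraliser of $N_0$ in $GL(W_0)$, since such an element extends to $V^*$ by the identity on $W'$ and then commutes with $N$. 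Note that the nilpotent Jordan form of $N_0$ is available directly over $\mathbb{R}$, so no complexification is needed.

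Fix a Jordan basis for $N_0$: cyclic generators $v_1,\dots,v_m$ spanning blocks of sizes $\lambda_1\geq\dots\geq\lambda_m$, with bottom vectors $b_i=N_0^{\lambda_i-1}v_i$ giving a basis of $\ker N_0$. A direct computation identifies the flag terms as $\Imag N_0^k\cap\ker N_0=\Span\{b_i\mid\lambda_i>k\}$, so in the ordered basis $(b_i)$ the flag-stabiliser is precisely the block-upper-triangular group whose blocks collect the indices of equal $\lambda_i$; concretely, a matrix $(g_{ji})$ preserves $F_{\omega^*}$ iff $g_{ji}=0$ whenever $\lambda_j<\lambda_i$. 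Given such a $g$, I would write down the candidate $r v_i=\sum_{\lambda_j\geq\lambda_i}g_{ji}\,N_0^{\lambda_j-\lambda_i}v_j$ and extend by $r(N_0^s v_i)=N_0^s(r v_i)$. The exponents are chosen exactly so that $N_0^{\lambda_i}(r v_i)=0$, which makes $r$ well defined and commuting with $N_0$, and one then reads off $r b_i=\sum_j g_{ji}b_j$, i.e.\ $r|_{\ker N_0}=g$.

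The step I expect to require the most care is verifying that this $r$ is invertible, completing the reverse inclusion. Ordering the blocks by decreasing size makes the matrix of $r$ block-upper-triangular on $W_0$, and its diagonal blocks (those mixing only generators of a common size $\lambda$) are exactly the corresponding diagonal blocks of $g$, tensored with the identity of the block; these are invertible because $g$ is invertible and block-triangular, so $r$ is invertible. Finally, transporting the resulting element of the centraliser of $\omega^*$ back to $H_\omega\subset GL(V)$ through the identification $r\mapsto(r^*)^{-1}$, which carries the centraliser of $\omega$ onto that of $\omega^*$, shows that $g$ indeed lies in $\Phi(H_\omega)$ and proves the proposition.
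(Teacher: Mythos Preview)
Your proof is correct, but the route differs from the paper's. The paper argues by induction on the length of the flag \(F_{\omega^*}\): in the base case \(l\le 1\) one has \(V^*=\Imag\omega^*\oplus\ker\omega^*\) and the claim is immediate; for the inductive step one restricts \(\omega^*\) to \(\Imag\omega^*\), observes that the resulting flag is the proper subflag \(\Imag\omega^*\cap\ker\omega^*\supset\Imag\omega^{*2}\cap\ker\omega^*\supset\cdots\), applies the inductive hypothesis to extend \(r\) to \(\Imag\omega^*\), and then chooses preimages on a complement of \(\Imag\omega^*+\ker\omega^*\) to extend to all of \(V^*\). No basis is ever fixed.

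Your argument instead reduces to the nilpotent part, chooses a Jordan basis, identifies the flag-stabiliser explicitly as a block-upper-triangular parabolic in the bottom vectors \(b_i\), and writes down the extension \(r v_i=\sum_{\lambda_j\ge\lambda_i} g_{ji}\,N_0^{\lambda_j-\lambda_i}v_j\) in closed form. This buys you an explicit lift and a transparent invertibility check via the block-triangular structure. The paper's coordinate-free induction, on the other hand, is what allows it to run the same template again in Theorem~\ref{the_worst} for the orthogonal case, where one must also preserve forms on the successive quotients; there a bare Jordan-basis argument would need to be upgraded to keep track of the signed Jordan data, whereas the inductive scheme carries over with only a Gram--Schmidt correction appended. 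So your approach is cleaner here but less portable to the next section.
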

\begin{proof}
Define the \emph{length} \(l=l(F_{\omega^*})\) of the flag \(F_{\omega^*}\) to be the least positive integer with \(\Imag{\omega^*}^l\cap\ker\omega^*=\{0\}\) . The length of \(F_{\omega^*}\) is finite and the proposition is trivially true for when \(l=0\). For when \(l=1\) we have the direct sum decomposition \(V^*=\Imag{\omega^*}\oplus\ker{\omega^*}\) as  \(\Imag{\omega^*}\cap\ker{\omega^*}=\{0\}\). The proposition is true for this case as we may set \(r\) to be the identity on \(\Imag\omega^*\) and equal to any isomorphism on \(\ker\omega^*\). Suppose for induction that the result is true for all \(\overline{\omega}^*\) with \(l(F_{\overline{\omega}^*})<l(F_{\omega^*})\). For a given isomorphism \(r\) of \(\ker\omega^*\) preserving \(F_{\omega^*}\) it suffices to show that it may be extended to an isomorphism of \(V^*\) which commutes with \(\omega^*\).

Let \(\overline{\omega}^*\) denote the restriction of \({\omega^*}\) to \(\Imag{\omega^*}\). The flag \(F_{\overline{\omega}^*}\) is equal to the subflag
\[
\Imag{\omega^*}\cap\ker{\omega^*}\supset\Imag {\omega^*}^2\cap\ker{\omega^*}\supset\dots
\]
of \(F_{\omega^*}\), and therefore \(l(F_{\overline{\omega}^*})<l(F_{\omega^*})\). By the induction hypothesis, we may extend the definition of \(r\) to an isomorphism of \(\Imag{\omega^*}\) in such a way that it commutes with \({\omega^*}\).

Now that we have defined \(r\) on \(\Imag{\omega^*}+\ker{\omega^*}\), let \(y_1,\dots,y_m\) be vectors in \(V^*\) such that \(\{[y_1],\dots,[y_m]\}\) is a basis of \(V^*/\Imag{\omega^*}+\ker{\omega^*}\). We define each \(ry_i\) to be equal to any element of \(V^*\) which satisfies \({\omega^*} ry_i=r{\omega^*} y_i\), and extend the definition linearly over the \(y_i\)s. Observe that \(r\) commutes with the isomorphism
\begin{equation}
V^*/(\Imag{\omega^*}+\ker{\omega^*})\overset{\cong}{\longrightarrow}\Imag{\omega^*}/\Imag{\omega^*}^2
\end{equation}
given by sending the class \([v]\) to \([{\omega^*} v]\). We therefore have extended \(r\) to an isomorphism over all of \(V^*\) which commutes with \({\omega^*}\) as desired.
\end{proof}
\subsection{Establishing the orbit bijection}
We are now in a position to show that there is a bijection between the set of orbits of \(H_\omega\) on \(\ker\omega^*\) with the set of \(H_\omega\)-orbits on \((\ker\omega^*)^*\). We begin by rewriting the flag \(F_{\omega^*}\) in \eqref{nilp_flag} as a strictly descending sequence of subspaces starting at \(\ker\omega^*\subset V^*\) and terminating at \(\{0\}\)
\begin{equation*}
F_{\omega^*}=\left(\ker\omega^*=E_0\supsetneq E_1\supsetneq E_2\supsetneq\dots\supsetneq E_{k+1}=\{0\}\right).
\end{equation*}
The previous proposition tells us that \(H_\omega\) acts on \(\ker\omega^*\) by all isomorphisms preserving this flag. Therefore, there are precisely \((k+2)\) orbits given by the sets
\begin{equation}\label{flag_orbits}
E_{0}\setminus E_1,~E_1\setminus E_2,~\dots,~E_k\setminus E_{k+1},~E_{k+1}=\{0\}.
\end{equation}
The contragredient representation of \(H_\omega\) on \((\ker\omega^*)^*\) must act by all isomorphisms which preserve the \emph{dual flag} \(F_{{\omega^*}}^\circ\) given by the ascending sequence of annihilators
\begin{equation}\label{dual_nilp_flag}
F^\circ_{\omega^*}\coloneqq\left(\{0\}\subsetneq E_1^\circ\subsetneq E_{2}^\circ,\dots,E_k^\circ\subsetneq E_{k+1}^\circ=(\ker\omega^*)^*\right).
\end{equation}
There are thus \((k+2)\) orbits in \((\ker\omega^*)^*\) given by the sets
\begin{equation}\label{dual_flag_orbits}
E_0^\circ=\{0\},~E_{1}^\circ\setminus E_0^\circ,~,~E^\circ_2\setminus E^\circ_1,~\dots,~ E_{k+1}^\circ\setminus E_k^\circ.
\end{equation}
We define the bijection between the sets of orbits in \eqref{flag_orbits} and \eqref{dual_flag_orbits} to be given by
\begin{equation}\label{flag_bij}
E_{k+1}\longleftrightarrow E_0^\circ,~\text{and  }(E_j\setminus E_{j+1})\longleftrightarrow (E_{j+1}^\circ\setminus E_j^\circ)~\text{for  }0\le j\le k.
\end{equation}
\begin{thm}\label{aff_bij}
	[Affine-group orbit bijection] There is a bijection between the set of \(GL(n)\)-orbits through \(\Delta\) with each of the sets of adjoint and coadjoint orbits of \(\aff(n)\).
\end{thm}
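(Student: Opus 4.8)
The plan is to read Theorem~\ref{aff_bij} as an instance of Theorem~\ref{orb_bijection} for $G=\aff(n)=GL(n)\ltimes\mathbb{R}^n$, supplying its two hypotheses — the little-group bijection and the centralizer-group bijection — with the help of an induction on $n$. Precisely, I would prove by strong induction the statement that the adjoint and coadjoint orbits of $\aff(m)$ are in bijection for every $m\ge 0$; the base case $m=0$ is the trivial group (or one may take $m=1$ and invoke the explicit matching of Example~\ref{aff1_ex}). Granting the inductive hypothesis for all $m<n$, an application of Theorem~\ref{orb_bijection} to $\aff(n)$ will both establish the $\Delta$-bijection asserted here and, by composing the two resulting bijections, furnish the adjoint–coadjoint bijection for $\aff(n)$ needed at the next stage.

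The centralizer-group hypothesis of Theorem~\ref{orb_bijection} requires no further work: Proposition~\ref{flag_propn_aff} identifies $\Phi(H_\omega)$ with the entire group of flag-preserving automorphisms of $\ker\omega^*$, whence its orbits are the strata \eqref{flag_orbits}, the contragredient orbits are the strata \eqref{dual_flag_orbits}, and \eqref{flag_bij} is precisely the required bijection between the $H_\omega$-orbits on $\ker\omega^*$ and on $(\ker\omega^*)^*$. This holds for every $\omega\in\mathfrak{gl}(n)$ and involves no recursion, so it is a direct citation of the preceding material.

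The little-group hypothesis is where the induction enters. By Proposition~\ref{aff_stab}, for $p\neq 0$ the little group $H_p=GL(n)_p$ is isomorphic to $\aff(n-1)$, so the inductive hypothesis furnishes a bijection between its adjoint and coadjoint orbits; for $p=0$ we have $H_0=GL(n)$, which is reductive, so under the trace form its adjoint and coadjoint orbits coincide, giving the identity bijection. As $GL(n)$ acts transitively on the nonzero elements of $V^*$, these two cases exhaust the $H$-orbits in $V^*$, and both hypotheses of Theorem~\ref{orb_bijection} are thereby met.

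The single point demanding care — and the step I expect to be the main obstacle — is the \emph{consistency} requirement built into the proof of Theorem~\ref{orb_bijection}: the family of little-group (and centralizer-group) bijections must be compatible with transport by elements $r\in H$. I would handle it as follows. Fix one representative $p_0$ in each $H$-orbit of $V^*$, equip $H_{p_0}$ with the bijection above, and transport it by declaring the bijection at $rp_0$ to send $\Ad_r\orb\leftrightarrow\Ad_{r^{-1}}^*\orb^*$. This is well defined provided that whenever $r'=rs$ with $s\in H_{p_0}$ the two prescriptions agree, which reduces to the observation that $s\in H_{p_0}$ fixes each of its own adjoint and coadjoint orbits; and this is immediate, since those orbits are by definition $H_{p_0}$-invariant. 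The same argument — or, more directly, the manifest naturality of the intrinsic flag bijection \eqref{flag_bij} under conjugation — secures consistency for the centralizer-group bijections. With both hypotheses and their consistency in hand, Theorem~\ref{orb_bijection} delivers the bijection of $GL(n)$-orbits through $\Delta$ with each of the sets of adjoint and coadjoint orbits of $\aff(n)$, closing the induction.
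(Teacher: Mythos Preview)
Your proposal is correct and follows essentially the same route as the paper: apply Theorem~\ref{orb_bijection} to $\aff(n)$, feed in the centralizer bijection~\eqref{flag_bij} from Proposition~\ref{flag_propn_aff}, and handle the little-group hypothesis by noting $H_p\cong GL(n)$ or $\aff(n-1)$ (Proposition~\ref{aff_stab}) together with induction on $n$ with base case $\aff(1)$. Your explicit treatment of the consistency requirement is more detailed than the paper's own proof of Theorem~\ref{aff_bij}, which simply invokes Theorem~\ref{orb_bijection} and leaves that step implicit.
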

\begin{proof}
	The groups \(H_p\) are isomorphic to either \(GL(n)\) if \(p=0\) or \(\aff(n-1)\) for \(p\ne 0\) (see Proposition~\ref{aff_stab}). For \(GL(n)\) the adjoint and coadjoint representations are isomorphic, therefore there is trivially a bijection between the two sets of orbits. For \(\aff(n-1)\) we suppose for induction that the result is true, noting from Figure~\ref{aff1} that this is true for \(\aff(1)\). The \(H_\omega\)-orbit bijection is given in \eqref{flag_bij} and thus the theorem follows by a direct application of Theorem~\ref{orb_bijection} together with induction on \(n\).
\end{proof}

\subsection{An iterative method for obtaining orbit types}
Consider the set of orbits in \(\Delta=\Delta_n\) for \(G=\aff(n)\). The set of orbits through \((\omega,0)\in\Delta\) is in bijection with the set of adjoint orbits of \(\mathfrak{h}=\mathfrak{gl}(n)\). Since the action of \(H\) on non-zero vectors in \(V^*\) is transitive, the remaining orbits are those through the points \((\omega,p)\), for some fixed non-zero \(p\). The set of all such orbits is in bijection with the set of adjoint orbits of \(\mathfrak{h}_p\), which by Proposition~\ref{aff_stab} is isomorphic to \(\affo(n-1)\). By Theorem~\ref{aff_bij}, the set of adjoint orbits in \(\affo(n-1)\) is itself in bijection with the set of \(GL(n-1)\)-orbits through \(\Delta_{n-1}\). We may apply the same argument iteratively to obtain a hierarchy of orbit types as demonstrated in Figure~\ref{aff_tree}

\begin{thm}
	The set of orbits in \(\Delta\) for \(G=\aff(n)\) with \(n>1\), is in bijection with the set of adjoint orbits of \(GL(k)\) for \(2\le k\le n\) and \(\aff(1)\).
\end{thm}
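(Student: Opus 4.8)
The plan is to prove the statement by induction on $n$, the inductive step being precisely the recursion exhibited in the paragraph preceding the theorem. Write $A_n$ for the set of $GL(n)$-orbits in $\Delta=\Delta_n$. The first task is to record cleanly the decomposition $A_n\cong\{\text{adjoint orbits of }GL(n)\}\sqcup A_{n-1}$, valid for $n>1$. This comes from partitioning the orbits in $\Delta_n$ according to whether the second coordinate $p$ vanishes. Orbits through $(\omega,0)$ transform by $r(\omega,0)=(\Ad_r\omega,0)$, so they are exactly the adjoint orbits of $\mathfrak{gl}(n)$, which are the adjoint orbits of $GL(n)$. Orbits meeting $\{p\ne 0\}$ all meet the fibre over a single fixed nonzero $p$, since $GL(n)$ acts transitively on nonzero vectors; these biject with the $H_p$-orbits on that fibre, namely the adjoint orbits of $\mathfrak{h}_p\cong\affo(n-1)$. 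By Theorem~\ref{aff_bij} applied at level $n-1$, these adjoint orbits of $\aff(n-1)$ are in bijection with $A_{n-1}$, which establishes the recursion.

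Second, I would unwind this recursion. Applying the decomposition in turn for $n,n-1,\dots,2$ gives $A_n\cong\bigsqcup_{k=2}^{n}\{\text{adjoint orbits of }GL(k)\}\sqcup A_1$. The base of the recursion is $A_1$, the set of $GL(1)$-orbits in $\Delta_1$; by Theorem~\ref{aff_bij} specialised to $n=1$ this is in bijection with the set of adjoint orbits of $\aff(1)$ (equivalently, the relevant orbits can be read directly off Figure~\ref{aff1}). Substituting this identification into the previous display yields exactly the asserted bijection.

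I expect the only delicate point — more a matter of bookkeeping than a genuine obstacle — to be the slice step: that the $GL(n)$-orbits in $\Delta_n$ meeting $\{p\ne0\}$ correspond bijectively to the $H_p$-orbits on the fibre $\mathfrak{h}_p$ over a fixed $p$. This is the standard fact that, for an equivariant projection onto a transitive base $GL(n)/H_p$, the orbits upstairs biject with the $H_p$-orbits on a single fibre; here $H_p$ preserves the fibre $\{\omega\in\mathfrak{h}_p\}$ and acts on it by the restriction of the adjoint representation, so these $H_p$-orbits are genuinely the adjoint orbits of $\aff(n-1)$. Everything else is the formal insertion of the inductive hypothesis, so the real content is simply that the recursion terminates at $\aff(1)$ rather than continuing further — which is why $\aff(1)$ appears as the irreducible base case in the statement rather than, say, the trivial group.
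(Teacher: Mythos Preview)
Your proposal is correct and follows essentially the same approach as the paper: partition the orbits in \(\Delta_n\) according to whether \(p=0\) (yielding the adjoint orbits of \(GL(n)\)) or \(p\ne 0\) (yielding, via transitivity on nonzero vectors and Proposition~\ref{aff_stab}, the adjoint orbits of \(\aff(n-1)\)), then invoke Theorem~\ref{aff_bij} to identify the latter with \(A_{n-1}\) and iterate down to \(\Delta_1\). The paper presents this argument informally in the paragraph preceding the theorem and in Figure~\ref{aff_tree}; your write-up simply packages the same recursion as an explicit induction with a clean treatment of the base case.
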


\begin{figure}
	\centering
	\begin{tikzcd}
		& \Delta_n \arrow[ld]\arrow[rd] &  &  &  \\
		\mathfrak{gl}(n) &  & \Delta_{n-1} \arrow[ld]\arrow[rd,dash,dashed] &  &  \\
		& \mathfrak{gl}(n-1) &  & \Delta_2 \arrow[ld]\arrow[rd] &  \\
		&  & \mathfrak{gl}(2) &  & \ \Delta_1
	\end{tikzcd}
	\caption{\label{aff_tree}Hierarchy of orbit types for \(\aff(n)\).}
\end{figure}
\section{The Poincar\'{e} group}
\subsection{Preliminaries}
Let \(V\) be a real \(n\)-dimensional vector space equipped with a non-degenerate, symmetric bilinear form \(Q\) of signature \((m,n)\). Vectors \(v\) are partitioned into three sets depending on the value of \(Q(v,v)\): if it is positive \(v\) is said to be \emph{timelike}, \emph{spacelike} if negative, and \emph{null} if it is zero. All such vectors spaces are isomorphic to \emph{Minkowski space} \(\mathbb{R}^{m+n}\). Letting \(\{e_1,\dots,e_m,f_1,\dots,f_n\}\) denote the standard basis of \(\mathbb{R}^{m+n}\) the bilinear form is given by \(Q(e_i,f_j)=0\), \(Q(e_i,e_j)=+\delta_{ij}\), and \(Q(f_i,f_j)=-\delta_{ij}\).

The \emph{indefinite orthogonal group} or \emph{Lorentz group} \(O(V;Q)\) is the group of isomorphisms of \(V\) which preserve \(Q\). The \emph{Poincar\'{e}} group is the semidirect product \(E(V;Q)=O(V;Q)\ltimes V\). For Minkowski space we write the Lorentz and Poincar\'{e} groups as \(O(m,n)\) and \(E(m,n)\), and their Lie algebras by \(\mathfrak{so}(m,n)\) and \(\mathfrak{se}(m,n)\) respectively.

\begin{propn}\label{sopq_stabilisers}
	Let \(\tau,\sigma\) and \(\nu\) be non-zero vectors in \(\mathbb{R}^{m+n}\) which are timelike, spacelike and null respectively. Then we have isomorphisms: \(O(m,n)_\tau\cong O(m-1,n)\), \(O(m,n)_\sigma\cong O(m,n-1)\) and \(O(m,n)_\nu\cong E(m-1,n-1)\).
\end{propn}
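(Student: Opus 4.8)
The plan is to treat the three cases separately, normalising the vectors via Witt's theorem. Since $O(m,n)$ preserves $Q$ and the stabiliser of a vector equals the stabiliser of any nonzero scalar multiple of it, I may rescale $\tau,\sigma$ to have norm $+1,-1$; by Witt's extension theorem $O(m,n)$ acts transitively on the vectors of each fixed norm, so the isomorphism type of the stabiliser does not depend on the representative. Concretely I would take $\tau=e_1$, $\sigma=f_1$, and $\nu=e_1+f_1$ in the standard basis. (The three cases implicitly require $m\ge 1$, $n\ge 1$, and $m,n\ge 1$ respectively, for such vectors to exist.)

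The timelike and spacelike cases are short. Because $Q(\tau,\tau)\neq 0$, the line $\langle\tau\rangle$ is nondegenerate and $V=\langle\tau\rangle\oplus\tau^\perp$ is an orthogonal direct sum. Any isometry fixing $\tau$ preserves $\tau^\perp$ and restricts to an isometry of it, while conversely any isometry of $\tau^\perp$ extends by fixing $\tau$; hence $O(m,n)_\tau$ is isomorphic to the orthogonal group of $\tau^\perp$ with its restricted form, whose signature is $(m-1,n)$. The identical argument for $\sigma$ gives signature $(m,n-1)$, yielding the first two isomorphisms.

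The null case is the main obstacle, since $\nu^\perp$ is degenerate ($\nu$ lies in its own orthogonal complement) and admits no orthogonal splitting. Here I would choose a hyperbolic partner, a null vector $\nu'$ with $Q(\nu,\nu')=1$, giving $V=\langle\nu,\nu'\rangle\oplus W$ where $W=\{\nu,\nu'\}^\perp$ carries a nondegenerate form of signature $(m-1,n-1)$. The strategy is to parametrise $O(m,n)_\nu$ explicitly: using the invariance of $Q$ together with $g\nu=\nu$, one shows that every such $g$ is forced into the form determined by a pair $(A,w)\in O(W)\times W$,
\[
g\nu=\nu,\qquad gx=Ax-Q(w,Ax)\,\nu\ \ (x\in W),\qquad g\nu'=\nu'+w-\tfrac{1}{2}Q(w,w)\,\nu,
\]
the scalar coefficients being pinned down by the conditions $Q(g\nu,g\nu')=1$, $Q(g\nu',g\nu')=0$, and $Q(g\nu',gx)=0$.

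It then remains to identify the group structure. I would compute directly that $g_{A,w}\circ g_{B,u}=g_{AB,\,w+Au}$, which is exactly the multiplication law of the semidirect product $O(W)\ltimes W$. As the assignment $(A,w)\mapsto g_{A,w}$ is a bijection (the action on $W$ modulo $\nu$ recovers $A$ and the image of $\nu'$ recovers $w$), it is a group isomorphism $E(m-1,n-1)=O(m-1,n-1)\ltimes\mathbb{R}^{m+n-2}\cong O(m,n)_\nu$. The one check to carry out carefully is the collapse of the cross terms in the composition using $A\in O(W)$, namely $Q(u,Bx)=Q(Au,ABx)$; this is precisely what makes the translation parameter transform by the defining representation rather than by a twisted action, and hence what produces the honest Poincar\'{e}-group structure $E(m-1,n-1)$.
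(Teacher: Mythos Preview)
Your argument is correct. For the timelike and spacelike cases you use exactly the same idea as the paper: the orthogonal complement $\tau^\perp$ (resp.\ $\sigma^\perp$) is a nondegenerate invariant subspace of the appropriate signature, and the stabiliser acts on it as its full orthogonal group.

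For the null case the paper does not give an argument at all; it simply cites \cite{cush06}. Your treatment is therefore strictly more self-contained. The explicit parametrisation $g_{A,w}$ via a hyperbolic partner $\nu'$ and the check $g_{A,w}\circ g_{B,u}=g_{AB,\,w+Au}$ is the standard way to prove this (and is essentially what one finds in the cited reference), so there is no genuine divergence of method. One small point worth making explicit in your write-up: you should record that \emph{every} element of $O(m,n)_\nu$ is of the form $g_{A,w}$, not just that each $g_{A,w}$ lies in the stabiliser. This follows from writing an arbitrary $g$ in block form relative to $\langle\nu\rangle\oplus\langle\nu'\rangle\oplus W$ and using the constraints $Q(g\nu,g\nu')=1$, $Q(g\nu,gx)=0$, $Q(g\nu',gx)=0$ to force $\psi(x)=0$, $b=1$, and $\phi(x)=-Q(w,Ax)$ in turn; you allude to this (``forced into the form'') but it is the step that guarantees surjectivity of $(A,w)\mapsto g_{A,w}$.
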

\begin{proof}
	The case for the timelike and spacelike vectors follows from the fact that the orthogonal complement to these vectors has signature \((m-1,n)\) and \((m,n-1)\) respectively and is an invariant subspace. For the non-zero null vector, this is proven in \cite[Section 2]{cush06}.
\end{proof}
\begin{ex}[Orbits of $E(1,1)$]\label{poinc11}
	The group \(O(1,1)\) is the group of \emph{Lorentz boosts}, which we write as \(r_\psi\) (where \(\psi\) denotes the \emph{rapidity}). Elements in \(E(1,1)\) will be denoted by \((r_\psi,d)\in O(1,1)\ltimes\mathbb{R}^{1+1}\) and in the Lie algebra by \((\omega,v)\in\mathfrak{so}(1,1)\times\mathbb{R}^{1+1}\). The adjoint action can be shown to be (see \eqref{Ad_action})
	\[
	\Ad_{(r_\psi,d)}(\omega,v)=\left(\omega,r_\psi v-\omega d\right).
	\]
	Identify \(\mathfrak{se}(1,1)\) with its dual by setting \(\langle (L,p),(\omega,v)\rangle=\Tr(L^T\omega)+p^TQv\) where now \(Q\) denotes the matrix \(\text{diag}(1,-1)\). The coadjoint action may be shown to be (see \eqref{coad_action})
	\[
	\Ad_{(r_\psi,d)}^*(L,p)=\left(L+\mu(r_\psi p,d),r_\psi p\right),
	\]
	where \(\mu(p,v)=(pv^T-vp^T)Q\). Here there exists the following orbit bijection: both origins to each other; adjoint orbits through \((\omega,v)\) for \(\omega\ne 0\) to coadjoint orbits through \((L,p)=(\omega,0)\); and coadjoint orbits through \((L,p)\) for \(p\ne 0\) to adjoint orbits through \((\omega,v)=(0,p)\). 
\end{ex}
\subsection{The centralizer group representation}

As with the affine group, the hardest part of our method will be finding an orbit bijection for the centralizer orbits of \(H_\omega\) on \(\ker\omega^*\) and its dual. From now on, a \emph{form} on a vector space will mean a symmetric or skew-symmetric bilinear form. 

Let \(H\) be the group \(O(V;Q)\) of all isomorphisms of \(V\) which preserve a given non-degenerate form \(Q\). For a given element \(\omega\) in the Lie algebra \(\mathfrak{h}\) of \(H\) we consider the centralizer subgroup \(H_\omega=\{r\in H~|~r\omega=\omega r\}\) and the representation 
\[
\Phi\colon H_\omega\longrightarrow GL(\ker\omega^*)
\]
given by restricting \(r\) to \(\ker\omega^*\). As \(\omega\) is a skew-self-adjoint operator with respect to \(Q\) we may identify \(V\) with \(V^*\) and \(\omega\) with \(-\omega^*\) and from now on consider the action of \(H_\omega\) on \(\ker\omega\). The group \(H_\omega\) is the subgroup of all isomorphisms preserving \(Q\) which commute with \(\omega\) and therefore the following flag is preserved.
\begin{equation}
\label{ortho_flag}
F_{\omega}=\left(\ker\omega\supset \Imag\omega\cap\ker\omega\supset\Imag\omega^2\cap\ker\omega\supset\dots\right).
\end{equation}
\begin{propn}\label{quotient_propn}
	 In addition to preserving the flag \(F_\omega\) the group \(\Phi(H_\omega)\) also preserves non-degenerate forms defined on the quotient spaces 
	\begin{equation}
	\label{flag_quotients}
	\Imag\omega^{m}\cap\ker\omega/\Imag\omega^{m+1}\cap\ker\omega
	\end{equation}
	for \(m\ge 0\) and where \(\omega^0\) is the identity. For \(\omega^mx\) and \(\omega^my\) belonging to \(\Imag\omega^m\cap\ker\omega\), the form is given on the quotient space in equation~\eqref{flag_quotients} by
	\begin{equation}
	\label{quotient_forms}
	\langle[\omega^mx],[\omega^my]\rangle\coloneqq Q(\omega^mx,y).
	\end{equation}
\end{propn}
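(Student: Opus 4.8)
The plan is to verify three properties of the pairing in \eqref{quotient_forms}: that it is well-defined on the quotient \eqref{flag_quotients}, that it is non-degenerate there, and that it is invariant under the induced action of $\Phi(H_\omega)$ (recording its symmetry type along the way). The one structural fact I would lean on throughout is that $\omega$ is skew-self-adjoint for $Q$, so that $Q(\omega a,b)=-Q(a,\omega b)$ and hence $Q(\omega^m a,b)=(-1)^m Q(a,\omega^m b)$ for every $m$. I would also use repeatedly that an element of $\Imag\omega^m\cap\ker\omega$ is exactly one of the form $\omega^m x$ with $\omega^{m+1}x=0$.

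First I would check that $Q(\omega^m x,y)$ defines a bilinear form on $\Imag\omega^m\cap\ker\omega$ that descends to the quotient. The value depends on the first slot only through $\omega^m x$, and it does not depend on the choice of lift $y$ of $\omega^m y$: if $\omega^m(y-y')=0$ then $Q(\omega^m x,y-y')=(-1)^m Q(x,\omega^m(y-y'))=0$ by skew-self-adjointness. To descend to the quotient I must show the form vanishes when an argument lies in $\Imag\omega^{m+1}\cap\ker\omega$; if $\omega^m x=\omega^{m+1}z$ then $Q(\omega^{m+1}z,y)=(-1)^{m+1}Q(z,\omega^{m+1}y)=0$, since $\omega^m y\in\ker\omega$ forces $\omega^{m+1}y=0$. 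The same manipulation, together with $Q(a,b)=\epsilon Q(b,a)$ where $\epsilon=\pm1$ according as $Q$ is symmetric or skew, yields $\langle[\omega^m x],[\omega^m y]\rangle=(-1)^m\epsilon\,\langle[\omega^m y],[\omega^m x]\rangle$, so the induced pairing is itself symmetric or skew-symmetric, as required.

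The substantive step, and the one I expect to be the main obstacle, is non-degeneracy. Suppose $Q(\omega^m x,y)=0$ for all admissible $y$, that is, for all $y\in\ker\omega^{m+1}$; I want to conclude $\omega^m x\in\Imag\omega^{m+1}$, which combined with $\omega^m x\in\ker\omega$ gives $[\omega^m x]=0$. The hypothesis says precisely that $\omega^m x\in(\ker\omega^{m+1})^\perp$, so everything reduces to the identity $(\ker\omega^{m+1})^\perp=\Imag\omega^{m+1}$. This is exactly where non-degeneracy of $Q$ enters: for any operator $T$ the $Q$-orthogonal complement of $\ker T$ equals the image of the $Q$-adjoint $T^*$ (one inclusion is immediate, the other follows from a dimension count that uses non-degeneracy), and since $\omega^*=-\omega$ we have $(\omega^{m+1})^*=(-1)^{m+1}\omega^{m+1}$, whose image is $\Imag\omega^{m+1}$. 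This delivers the desired inclusion, and hence non-degeneracy.

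Finally I would confirm $\Phi(H_\omega)$-invariance. An element $r\in H_\omega$ commutes with $\omega$, so it preserves each subspace $\Imag\omega^m\cap\ker\omega$ and therefore acts on the quotient by $[\omega^m x]\mapsto[\omega^m(rx)]$. Because $r$ preserves $Q$, we compute $Q(\omega^m(rx),ry)=Q(r\omega^m x,ry)=Q(\omega^m x,y)$, so the induced map leaves the pairing invariant. Assembling these four steps establishes the proposition.
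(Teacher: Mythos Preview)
Your argument is correct. It differs in organisation from the paper's proof, and the difference is worth noting.

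The paper proceeds by induction on the index: it shows that each $\Imag\omega^{k}$ carries a non-degenerate form $\overline{Q}(\omega^{k}x,\omega^{k}y)=Q(\omega^{k}x,y)$, observes that the restriction $\overline{\omega}$ of $\omega$ to $\Imag\omega^{k}$ is skew-self-adjoint for $\overline{Q}$, and then uses the single fact $(\ker\overline{\omega})^{\perp}=\Imag\overline{\omega}$ to pass to a non-degenerate form on $\ker\overline{\omega}/(\Imag\overline{\omega}\cap\ker\overline{\omega})$, which is exactly the quotient in question. The inductive step is to propagate the form from $\Imag\omega^{k}$ to $\Imag\omega^{k+1}$.

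You bypass this tower entirely: for each $m$ you verify well-definedness, descent, and invariance by direct computation, and you obtain non-degeneracy in one shot from the identity $(\ker\omega^{m+1})^{\perp}=\Imag\omega^{m+1}$, which is just the paper's key fact applied to the power $\omega^{m+1}$ rather than to the restriction $\overline{\omega}$. Your route is more self-contained and also makes the $\Phi(H_{\omega})$-invariance explicit, which the paper leaves tacit. The paper's inductive packaging, on the other hand, makes visible the intermediate non-degenerate forms on the $\Imag\omega^{k}$, which are used later in the proof of Theorem~\ref{the_worst}; so while your argument suffices for the proposition as stated, the paper's version is doing double duty.
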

\begin{proof}
	Suppose for induction that \(\overline{V}=\Imag\omega^{k}\) is equipped with a non-degenerate form given by \(\overline{Q}(\omega^{k}x,\omega^{k}y)=Q(\omega^{k}x,y)\); this is true for \(k=0\). Let \(\overline{\omega}\) denote the restriction of \(\omega\) to \(\overline{V}\).
	
	Observe that \(\overline{\omega}\) is skew-self-adjoint with respect to \(\overline{Q}\) and so \((\ker\overline{\omega})^\perp=\Imag\overline{\omega}\). Therefore \(\Imag\overline{\omega}\cap\ker\overline{\omega}\) is the null space for \(\ker\overline{\omega}\). It follows that the quotient \(\ker\overline{\omega}/\Imag\overline{\omega}\cap\ker\overline{\omega}\) inherits a non-degenerate form by restriction of \(\overline{Q}\). By noting that \(\Imag\overline{\omega}=\Imag\omega^{k+1}\) and \(\ker\overline{\omega}=\Imag\omega^{k}\cap\ker\omega\), we see that we have proved the result for \(m=k+1\).
	
	The image \(\Imag\overline{\omega}\) is also equipped with a non-degenerate form given by \(\overline{\overline{Q}}(\overline{\omega} a,\overline{\omega} b)=\overline{Q}(\overline{\omega}a,b)\). Recalling that \(\Imag\overline{\omega}=\Imag\omega^{k+1}\) and writing \(a=\omega^k x\) and \(b=\omega^k y\), we therefore see that \(\Imag\omega^{k+1}\) is equipped with a non-degenerate form given by \(\overline{\overline{Q}}(\omega^{k+1}x,\omega^{k+1}y)=Q(\omega^{k+1}x,y)\), and thus the result is proven by induction on \(k\).
\end{proof}

\begin{thm}\label{the_worst}
	The subgroup \(\Phi(H_\omega)\subset GL(\ker\omega)\) is precisely equal to the group of all isomorphisms of \(\ker\omega\) which preserve the flag \(F_\omega\) together with all the forms given in equation~\eqref{quotient_forms} on the quotient spaces.
\end{thm}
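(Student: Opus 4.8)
The plan is to prove the theorem by establishing the one inclusion not already covered by Proposition~\ref{quotient_propn}. That proposition shows \(\Phi(H_\omega)\) is contained in the group \(G'\) of isomorphisms of \(\ker\omega\) preserving \(F_\omega\) together with the forms~\eqref{quotient_forms}; what remains is to show every \(r\in G'\) extends to an isometry \(\tilde r\) of \((V,Q)\) commuting with \(\omega\). Before inducting I would first reduce to the case that \(\omega\) is nilpotent. The Fitting decomposition \(V=V_0\oplus V_1\), with \(V_0=\ker\omega^N\) and \(V_1=\Imag\omega^N\) for \(N\gg 0\), consists of two \(\omega\)-invariant subspaces which are mutually \(Q\)-orthogonal (since \(Q(x,\omega^N z)=\pm Q(\omega^N x,z)\)) and on which \(Q\) is therefore non-degenerate. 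As \(\ker\omega\subseteq V_0\) and the entire flag \(F_\omega\) lives in \(V_0\), any \(r\in G'\) is supported on \(V_0\); extending by the identity on \(V_1\) (which commutes with \(\omega\) and is a \(Q\)-isometry there) reduces us to \(\omega\) nilpotent.

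With \(\omega\) nilpotent I would induct on the length \(l\) of \(F_\omega\), following the template of Proposition~\ref{flag_propn_aff}. When \(l\le 1\) the subspace \(\ker\omega\) is \(Q\)-non-degenerate (its radical is \(\ker\omega\cap(\ker\omega)^\perp=\ker\omega\cap\Imag\omega=\{0\}\)), so \(V=\ker\omega\oplus\Imag\omega\) orthogonally, \(G'=O(\ker\omega,Q)\), and any isometry of \(\ker\omega\) extends by the identity on \(\Imag\omega\). For the inductive step I would pass to \(\bar V=\Imag\omega\) equipped with the non-degenerate form \(\bar Q\) and operator \(\bar\omega=\omega|_{\Imag\omega}\) supplied by Proposition~\ref{quotient_propn}; the flag \(F_{\bar\omega}\) is the tail of \(F_\omega\) and has length \(l-1\). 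A direct check using \(\bar Q(\omega x,\omega y)=Q(\omega x,y)\) shows the quotient forms of \(F_{\bar\omega}\) agree with those of \(F_\omega\) on the common subquotients, so \(r|_{E_1}\) lies in the corresponding flag-and-forms group for \(\bar V\) and the induction hypothesis applies. One then extends \(r\) across \(\ker\omega+\Imag\omega\) and finally over a complement by choosing images \(\tilde r y_i\) solving the commutation relation \(\omega\,\tilde r y_i=\tilde r\,\omega y_i\), exactly as in the affine case.

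The \textbf{main obstacle}, and the genuinely new difficulty compared with Proposition~\ref{flag_propn_aff}, is forcing \(\tilde r\) to preserve \(Q\) rather than merely commute with \(\omega\). The induction on \(\bar V\) only produces an isometry of the auxiliary form \(\bar Q\), which does \emph{not} coincide with the (degenerate) restriction of \(Q\) to \(\Imag\omega\), so the pieces cannot simply be glued. The resolution I would pursue is to exploit the remaining freedom in the extension: the solution of the commutation relation is determined only up to addition of elements of \(\ker\omega\), and since the top quotient form on \(E_0/E_1=\ker\omega/(\ker\omega\cap\Imag\omega)\) is non-degenerate, the assignment \(c\mapsto Q(c,\,\cdot\,)\) identifies \(\ker\omega/E_1\) with exactly the functionals on \(\ker\omega+\Imag\omega\) that vanish on \(\Imag\omega\). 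Using the skew-adjointness of \(\omega\) to rewrite the cross-pairings \(Q(\tilde r y_i,\tilde r w)\) for \(w\in\Imag\omega\), I would show that this freedom is precisely enough to adjust the chosen images so that \(\tilde r\) becomes a \(Q\)-isometry while retaining the commutation relation and the already-fixed values on \(\ker\omega+\Imag\omega\). Verifying that the form-preservation hypothesis on \(r\) (and not merely flag-preservation) is what makes these corrections solvable is the crux of the argument, and is where the quotient forms of~\eqref{quotient_forms} earn their place.
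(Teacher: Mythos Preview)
Your approach is essentially the paper's: induct on the length of \(F_\omega\), pass to \((\bar V,\bar Q,\bar\omega)=(\Imag\omega,\bar Q,\omega|_{\Imag\omega})\) to handle the tail of the flag, extend across a complement by solving \(\omega\,\tilde r y_i=\tilde r\,\omega y_i\), and then correct using the \(\ker\omega\)-ambiguity in that solution. The preliminary Fitting reduction to nilpotent \(\omega\) is harmless but unnecessary---the induction on \(l(F_\omega)\) already terminates, since \(F_{\bar\omega}\) is the tail of \(F_\omega\) regardless of nilpotency.

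One point is sketchier than it should be. After extension, the \(\bar Q\)-isometry on \(\Imag\omega\) together with \(r\omega=\omega r\) already forces \(Q(r\omega x,ry)=Q(\omega x,y)\) for all \(y\), so only the pairings among \(\{x_j,y_i\}\) (bases for \(\ker\omega/(\Imag\omega\cap\ker\omega)\) and \(V/(\Imag\omega+\ker\omega)\)) remain. Your identification \(\ker\omega/E_1\cong\) ``functionals on \(\ker\omega+\Imag\omega\) vanishing on \(\Imag\omega\)'' only corrects the \((y_i,x_j)\)-pairings; it does not touch the \((y_i,y_j)\)-pairings, since the \(y_j\) lie outside \(\ker\omega+\Imag\omega\). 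The paper resolves this with a \emph{two-stage} adjustment: first add \(k_i\in\ker\omega\) to fix \(Q(ry_i,rx_j)\), then add \(\omega z_i\in E_1=\Imag\omega\cap\ker\omega\) to fix \(Q(ry_i,ry_j)\). The second stage works because the pairing between \(E_1=\ker\bar\omega\) and \(\Imag\omega/\Imag\omega^2\cong V/(\Imag\omega+\ker\omega)\) is non-degenerate via \(\bar Q\), and because \(Q(\omega z_i,x_j)=-Q(z_i,\omega x_j)=0\), so the first stage is not disturbed. Your sketch should make this second correction explicit; as written, the freedom you isolate is not quite enough.
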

\begin{proof}
	For when \(l(F_\omega)=1\) we have \(\Imag\omega\cap\ker\omega=\{0\}\) and thus we have the orthogonal, direct sum decomposition \(V=\Imag\omega\oplus\ker\omega\). The theorem is true in this case as we may define \(r\) to be the identity on \(\Imag\omega\), and any isomorphism on \(\ker\omega\) which preserves \(Q\). Suppose for induction that the theorem is true for all triples \((\overline{V},\overline{Q},\overline{\omega})\) with \(l(F_{\overline{\omega}})<l(F_\omega)\). We begin by fixing an isomorphism \(r\) of \(\ker\omega\) which preserves \(F_\omega\) together with all of its forms. It suffices to show that \(r\) may be extended to an isomorphism of \(V\) which preserves \(Q\) and commutes with \(\omega\).
	
	Let \(\overline{\omega}\) denote the restriction of \(\omega\) to \(\overline{V}=\Imag\omega\) and recall that this is equipped with the non-degenerate form \(\overline{Q}(\omega x,\omega y)=Q(\omega x,y)\). We leave it as an exercise to show that the flag \(F_{\overline{\omega}}\) is equal to the subflag
	\[
	\Imag\omega\cap\ker\omega\supset\Imag\omega^2\cap\ker\omega\supset\dots
	\]
	of \(F_\omega\) and that moreover, the forms on the quotient spaces coincide. By the induction hypothesis, since \(l(F_{\overline{\omega}})<l(F_\omega)\), we can extend \(r\) to an isomorphism of \(\Imag\omega\) which commutes with \(\omega\) and preserves \(\overline{Q}\).
	
	Now that we have defined \(r\) on \(\Imag\omega+\ker\omega\), let \(y_1,\dots,y_m\) be vectors in \(V\) such that \(\{[y_1],\dots,[y_m]\}\) is a basis of \(V/\Imag\omega+\ker\omega\). We define each \(ry_i\) to be equal to any element of \(V\) which satisfies \(\omega ry_i=r\omega y_i\), and extend the definition linearly over the \(y_i\)s. Observe that \(r\) then commutes with the isomorphism
	\begin{equation}
	\label{usefule_isom}
	V/(\Imag\omega+\ker\omega)\overset{\cong}{\longrightarrow}\Imag\omega/\Imag\omega^2
	\end{equation}
	given by sending the class \([v]\) to \([\omega v]\). We therefore have extended \(r\) to an isomorphism over all of \(V\) which additionally preserves the form \(\overline{Q}\) on \(\Imag\omega\), and commutes with \(\omega\). This implies that \(Q(r\omega x,ry)=Q(\omega x,y)\) for all \(\omega x\in\Imag\omega\) and \(y\in V\). 
	
	It remains then to show that \(Q(rx,ry)=Q(x,y)\) for \(x\) and \(y\) not in \(\Imag\omega\). We will apply a Gram-Schmidt style procedure to alter the definition of each \(ry_i\) to force this to hold. Let \(x_1,\dots,x_m\) be vectors in \(\ker\omega\) such that \(\{[x_1],\dots,[x_l]\}\) is a basis of \(\ker\omega/\Imag\omega\cap\ker\omega\). Observe that \(\{[x_1],\dots,[x_l],[y_1],\dots,[y_m]\}\) is then a basis of \(V/\Imag\omega\). It therefore suffices to show that \(r\) preserves \(Q\) when restricted to the elements \(x_1,\dots,x_l,y_1,\dots,y_m\). 
	
	We begin by claiming that \(Q(rx_i,rx_j)=Q(x_i,x_j)\); this follows from the fact that \(r\) preserves the form \(Q\) restricted to the quotient \(\ker\omega/\Imag\omega\cap\ker\omega\). As the pairing between \(\ker\omega\) and \(V/\Imag\omega\) is non-degenerate, it follows that there exist \(k_i\in\ker\omega\) for each \(1\le i\le m\) which satisfy
	\[
	Q(k_i,rx_j)=Q(y_i,x_j)-Q(ry_i,rx_j)
	\]
	for all \(1\le j\le l\). Redefine each \(ry_i\) to now equal \(ry_i+k_i\) and extend linearly over the \(y_i\)s. Verify that \(Q(ry_i,rx_j)=Q(y_i,x_j)\) and \(\omega ry_i=r\omega y_i\) for all pairs of \(i\) and \(j\).

As the isomorphism in equation~\eqref{usefule_isom} commutes with \(r\) it follows that the elements \([\omega ry_1],\dots,[\omega ry_m]\) form a basis of \(\Imag\omega/\Imag\omega^2=\overline{V}/\Imag\overline{\omega}\). As the pairing between \(\ker\overline{\omega}=\Imag\omega\cap\ker\omega\) and \(\overline{V}/\Imag\overline{\omega}\) is non-degenerate, we may sequentially construct \(\omega z_i\in\ker\overline{\omega}\), for each \(1\le i\le m\), which satisfy
	\[
	\overline{Q}(\omega z_i,\omega ry_j)=
	Q(y_i,y_j)-Q(ry_i,ry_j)-Q(ry_i,\omega z_j)
	\]
	for all \(j<i\); and, if \(Q\) is symmetric, \(2\overline{Q}(\omega z_i,\omega ry_i)=Q(y_i,y_i)-Q(ry_i,ry_i)\). We once again alter the definition of \(ry_i\) and change it to equal \(ry_i+\omega z_i\). One can now verify that \(Q(ry_i,rx_j)=Q(y_i,x_j)\), \(Q(ry_i,ry_j)=Q(y_i,y_j)\), \(Q(ry_i,r\omega x)=Q(y_i,\omega x)\), and that \(r\omega y_i=\omega ry_i\) for all pairs of \(i\) and \(j\), and all \(\omega x\in\Imag\omega\). By extending the definition of \(r\) linearly over the \(y_i\)s we thus obtain an isomorphism of \(V\) which commutes with \(\omega\) and preserves \(Q\) as desired.
	\end{proof}

\subsection{Establishing the orbit bijection}
To apply Theorem~\ref{orb_bijection} and prove an orbit bijection result for the Poincar\'{e} group we need to demonstrate an orbit bijection for the little subgroups \(H_p\) and the centralizer groups \(H_\omega\). We begin by considering the representation of \(H_\omega\) on \(\ker\omega\) for where \(H=O(m,n)\). Rewrite the flag in equation~\eqref{ortho_flag} as a strictly descending sequence of subspaces
\begin{equation}\label{distinct_flag}
F_\omega=\left(\ker\omega=E_0\supsetneq E_1\supsetneq E_2\supsetneq\dots\supsetneq E_{k+1}=\{0\}\right).
\end{equation}
Let \(Q_j\) denote the non-degenerate form defined on each quotient \(E_j/E_{j+1}\) given by equation~\eqref{quotient_forms} in Proposition~\ref{quotient_propn}. From Theorem~\ref{the_worst}, the representation of \(H_\omega\) acts on \(\ker\omega\) by all isomorphisms which preserve \(F_\omega\) together with all of the forms \(Q_j\). We therefore see that any orbit which is not \(E_k=\{0\}\) is contained to a set of the form \(E_j\setminus E_{j+1}\) for \(0\le j\le k\). Suppose the element \(p\) belongs to such an orbit. We remark that any other element \(\widetilde{p}\) belonging to the same non-zero equivalence class of \([p]\in E_j/E_{j+1}\) also belongs to the same orbit. As \(H_\omega\) must preserve the form \(Q_j\) it follows that the orbit through \(p\) uniquely defines an orbit through \([p]\) of \(O(E_j/E_{j+1};Q_j)\). In summary then, the \(H_\omega\)-orbits correspond to the origin \(\{0\}\), and an integer \(0\le j\le k\) together with an orbit of \(O(E_j/E_{j+1};Q_j)\) in \(E_j/E_{j+1}\).

The contragredient representation of \(H_\omega\) on \((\ker\omega)^*\) must act by the group of all isomorphisms which preserve the dual flag
\[
F^\circ_\omega=\left(\{0\}\subsetneq E_1^\circ\subsetneq E_2^\circ\subsetneq\dots\subsetneq E_{k+1}^\circ=(\ker\omega)^*\right)
\]
together with the non-degenerate co-forms \(Q_j^*\) defined on each quotient \(E_{j+1}^\circ/E_j^\circ\cong\left(E_j/E_{j+1}\right)^*\). Repeating the argument in the previous paragraph, we see that the orbits are either the origin, or contained to the sets of the form \(E_{j+1}^\circ\setminus E_j^\circ\) for each \(0\le j\le k\), and that such an orbit determines an orbit of \(O(E_{j+1}^\circ/E_j^\circ;Q_j^*)\) in \(E_{j+1}^\circ/E_j^\circ\).

For any \(j\) with \(0\le j\le k\) consider the map \(\varphi\colon E_{j}\rightarrow E_{j+1}^\circ\) given by satisfying
\begin{equation}
\label{varphi_map}
\langle \varphi(p),q\rangle=Q_{j}([p],[q])
\end{equation}
for all \(p,q\in E_{j}\). The map \(\varphi\) descends to the quotient spaces to give an isomorphism \(\varphi\colon E_{j}/E_{j+1}\rightarrow E^\circ_{j+1}/E^\circ_{j}\) which preserves the forms; that is \(\varphi^*Q_{j}^*=Q_j\). This map therefore establishes the bijection.
\begin{propn}\label{ortho_flag_bij}
	There is a bijection between the set of \(H_\omega\)-orbits in \(\ker\omega\) with the set of \(H_\omega\)-orbits in \((\ker\omega)^*\). Any orbit through a non-zero \(p\in\ker\omega\) is contained to a set \(E_j\setminus E_{j+1}\) with respect to the flag \(F_\omega\) given in \eqref{distinct_flag} and for \(0\le j\le k\). The corresponding bijected orbit is that through \(\varphi(p)\in E_{j+1}^\circ\setminus E_j^\circ\) where \(\varphi\) is given in \eqref{varphi_map}. The orbits equal to the origin are both bijected with each other.
\end{propn}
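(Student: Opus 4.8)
The plan is to assemble the two orbit classifications derived in the paragraphs immediately preceding the statement with the single new ingredient, namely that the map $\varphi$ of \eqref{varphi_map} is a form-preserving isomorphism on each quotient. By that preceding discussion the nonzero $H_\omega$-orbits in $\ker\omega$ are stratified by the integer $0\le j\le k$ for which the orbit lies in $E_j\setminus E_{j+1}$, and within each stratum the $H_\omega$-orbit through $p$ is completely determined by the $O(E_j/E_{j+1};Q_j)$-orbit through the class $[p]$; symmetrically, the nonzero $H_\omega$-orbits in $(\ker\omega)^*$ are stratified by the $j$ with the orbit in $E_{j+1}^\circ\setminus E_j^\circ$ and are determined by the $O(E_{j+1}^\circ/E_j^\circ;Q_j^*)$-orbit through the image class. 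Thus on each side the set of orbits decomposes as a disjoint union, over $0\le j\le k$, of the nonzero orbits on the $j$-th quotient, together with the single orbit $\{0\}$.

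First I would record the property of $\varphi$ on which everything hinges: passing to the quotients, $\varphi$ gives an isomorphism $\varphi\colon E_j/E_{j+1}\to E_{j+1}^\circ/E_j^\circ$ satisfying $\varphi^*Q_j^*=Q_j$, so that $\varphi$ is an isometry of the quadratic spaces $(E_j/E_{j+1},Q_j)$ and $(E_{j+1}^\circ/E_j^\circ,Q_j^*)$. I would check this directly from \eqref{varphi_map} and from the definition of the co-form $Q_j^*$ as the form on $(E_j/E_{j+1})^*\cong E_{j+1}^\circ/E_j^\circ$ dual to $Q_j$; this short computation is the only genuine content of the argument. Note that since $\varphi$ is an isomorphism of quotients it automatically sends $E_j\setminus E_{j+1}$ into $E_{j+1}^\circ\setminus E_j^\circ$, matching the concrete description in the statement.

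Second I would invoke the elementary fact that an isometry intertwines the actions of the corresponding orthogonal groups: conjugation $g\mapsto\varphi g\varphi^{-1}$ carries $O(E_j/E_{j+1};Q_j)$ isomorphically onto $O(E_{j+1}^\circ/E_j^\circ;Q_j^*)$, so $\varphi$ transports each $O(E_j/E_{j+1};Q_j)$-orbit bijectively onto an $O(E_{j+1}^\circ/E_j^\circ;Q_j^*)$-orbit, nonzero orbits to nonzero orbits and $\{0\}$ to $\{0\}$. Matching the origin with the origin and, for each $j$, matching the orbit through $p\in E_j\setminus E_{j+1}$ with the orbit through $\varphi(p)\in E_{j+1}^\circ\setminus E_j^\circ$ then defines a bijection stratum by stratum, and the disjoint union over $j$ yields the claimed global bijection. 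The argument is essentially bookkeeping once the classifications are in hand; the expected obstacle is only the verification of $\varphi^*Q_j^*=Q_j$, and no difficulty arises from the orthogonal orbits themselves, since we never need their explicit description---only that a form-preserving isomorphism carries them faithfully onto one another.
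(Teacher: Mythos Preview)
Your proposal is correct and follows essentially the same route as the paper: the proposition is presented there as a summary of the preceding discussion, which classifies the $H_\omega$-orbits on each side via the quotients $E_j/E_{j+1}$ and $E_{j+1}^\circ/E_j^\circ$ and then observes that $\varphi$ descends to a form-preserving isomorphism $\varphi^*Q_j^*=Q_j$, thereby matching the orbits. Your write-up simply makes explicit the step that an isometry intertwines the two orthogonal actions, which the paper leaves implicit in the sentence ``This map therefore establishes the bijection.''
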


\begin{thm}\label{poinc_bij}
	[Poincar\'{e}-group orbit bijection]  There is a bijection between the set of \(O(m,n)\)-orbits through \(\Delta\) with each of the sets of adjoint and coadjoint orbits of \(E(m,n)\).
\end{thm}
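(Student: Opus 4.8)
The plan is to follow the template of Theorem~\ref{aff_bij} and reduce the statement to a single application of Theorem~\ref{orb_bijection}, using an induction to dispatch the one little group that is again a Poincar\'{e} group. First I would enumerate the little groups \(H_p=O(m,n)_p\) by appealing to Proposition~\ref{sopq_stabilisers}: for \(p=0\) the little group is the full Lorentz group \(O(m,n)\); for \(p\) timelike or spacelike it is \(O(m-1,n)\) or \(O(m,n-1)\); and for \(p\) null it is \(E(m-1,n-1)\). The three orthogonal groups are reductive, so their adjoint and coadjoint representations are isomorphic (via an invariant non-degenerate form on the Lie algebra) and the required orbit bijection is immediate. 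The only non-reductive little group is \(E(m-1,n-1)\), and a null vector exists precisely when \(m\ge 1\) and \(n\ge 1\); this observation is what grounds the recursion.

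I would then induct on \(\min(m,n)\). In the base case \(\min(m,n)=0\) the form \(Q\) is definite, so there are no null vectors and every little group \(H_p\) is one of the reductive orthogonal groups above; the first hypothesis of Theorem~\ref{orb_bijection} therefore holds outright. The second hypothesis, namely a bijection between the \(H_\omega\)-orbits on \(\ker\omega\) and those on its dual, is supplied unconditionally by Proposition~\ref{ortho_flag_bij}, so the base case follows by direct application of Theorem~\ref{orb_bijection}. For the inductive step \(\min(m,n)\ge 1\), the null little group \(E(m-1,n-1)\) satisfies \(\min(m-1,n-1)<\min(m,n)\), so its adjoint--coadjoint orbit bijection is available by the inductive hypothesis; together with the reductive cases this verifies the first hypothesis, while Proposition~\ref{ortho_flag_bij} again verifies the second. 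Applying Theorem~\ref{orb_bijection} yields the bijection for \(E(m,n)\).

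The step I expect to demand the most care is not the existence of these little-group bijections but their \emph{consistency} in the sense required inside the proof of Theorem~\ref{orb_bijection}: as \(p\) ranges over an \(O(m,n)\)-orbit in \(V\) the \(H_p\)-bijections must intertwine the assignments \(\orb\mapsto\Ad_r\orb\) and \(\orb^*\mapsto\Ad_{r^{-1}}^*\orb^*\), and the \(H_\omega\)-bijections must be compatible with the \(H\)-action relating different \(\omega\). For the reductive little groups this is automatic, since the bijection is induced by an \(H\)-invariant form; for the centralizer bijection it follows from the naturality of the canonical map \(\varphi\) of~\eqref{varphi_map}, which is built from the intrinsic quotient forms \(Q_j\); and for the \(E(m-1,n-1)\) little group I would need to confirm that the inductively constructed bijection is itself natural enough to stay consistent, which reduces to checking that the entire construction is equivariant. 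Once this compatibility is secured, the theorem is a formal consequence of Theorem~\ref{orb_bijection} and the induction on \(\min(m,n)\).
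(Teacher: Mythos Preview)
Your proposal is correct and matches the paper's proof essentially line for line: invoke Proposition~\ref{ortho_flag_bij} for the \(H_\omega\)-hypothesis, classify the little groups via Proposition~\ref{sopq_stabilisers}, observe that the orthogonal ones are reductive, and handle the remaining \(E(m-1,n-1)\) case by induction. The only cosmetic differences are that the paper also lists \(E(1,1)\) (Example~\ref{poinc11}) among its base cases rather than letting your induction on \(\min(m,n)\) absorb it, and it does not revisit the consistency issue you raise.
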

\begin{proof}
	To begin with, the \(H_\omega\)-orbit bijection follows from Proposition~\ref{ortho_flag_bij}. Therefore, in order to apply Theorem~\ref{orb_bijection} it remains to show that there is a bijection between the adjoint and coadjoint orbits of the \(H_p\) little groups.
	
	Since the vector representation of \(O(m,n)\) is isomorphic to its dual, the subgroups \(H_p\) for \(p\in V^*\) are isomorphic to the stabiliser subgroups for different vectors in \(V\). From Proposition~\ref{sopq_stabilisers} these groups are isomorphic to \(O(m,n)\), \(O(m-1,n)\), \(O(m,n-1)\) and \(E(m-1,n-1)\) for when \(p\) is zero, timelike, spacelike, and non-zero and null respectively (and for whenever the entries are non-negative). For the first three cases, these groups are semisimple, and thus the adjoint and coadjoint representations are isomorphic; consequently they trivially exhibit an orbit bijection. Thus the theorem is true for when \(G\) is a Euclidean group \(E(m,0)\) or \(E(0,n)\). For when \(H_p\cong E(m-1,n-1)\) we proceed by induction, assuming that the orbit bijection is already true for this group; our base cases being the groups \(E(m,0)\) and \(E(0,n)\), and \(E(1,1)\) which we verified in Example~\ref{poinc11}. 
\end{proof}
\subsection{An iterative method for obtaining orbit types}
Consider the set of orbits in \(\Delta=\Delta_{m,n}\) for \(G=E(m,n)\). Identify \(V\) with its dual using the form \(Q\) and recall that the orbits of \(V\) correspond to the sets: \(Q(v,v)\) equal to a non-zero constant, \(v=0\), and the set of non-zero null vectors. It follows that every orbit of \(\Delta_{m,n}\) contains a point of the form: \((\omega,0)\), \((\omega,t\tau)\), \((\omega,s\sigma)\), and \((\omega,\nu)\); for \(\tau\), \(\sigma\), and \(\nu\) fixed timelike, spacelike, and non-zero and null vectors respectively, and scalars \(t,s>0\). From Proposition~\ref{sopq_stabilisers}, it follows that the set of orbits through these points is in bijection with the set of adjoint orbits of \(\mathfrak{so}(m,n)\), \(\mathfrak{so}(m-1,n)\times\mathbb{R}^{>0}_t\), \(\mathfrak{so}(m,n-1)\times\mathbb{R}^{>0}_s\), and \(\mathfrak{se}(m-1,n-1)\) respectively. By Theorem~\ref{poinc_bij}, the set of adjoint orbits of \(\mathfrak{se}(m-1,n-1)\) is itself also in bijection with the set of  \(O(m-1,n-1)\)-orbits through \(\Delta_{m-1,n-1}\). We may apply the same argument iteratively to obtain a hierarchy of orbit types, as demonstrated in Figure~\ref{poinc_tree}.
\begin{thm}
	The set of orbits in \(\Delta\) for \(G=E(m,n)\) is in bijection with the set of adjoint orbits of \(O(m-k,n-k)\) for \(0\le k\le\min\{m,n\}\), \(O(m-k-1,n-k)\times\mathbb{R}^{>0}_t\) for \(0\le k\le\min\{m-1,n\}\), and \(O(m-k,n-k-1)\times\mathbb{R}^{>0}_s\) for \(0\le k\le\min\{m,n-1\}\).
\end{thm}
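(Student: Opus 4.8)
The plan is to iterate the decomposition already carried out in the paragraph preceding the statement, unwinding the recursion $\Delta_{m,n}\rightsquigarrow\Delta_{m-1,n-1}$ all the way down to a base case. The key observation, established just above, is that the set of orbits in $\Delta_{m,n}$ splits according to the four orbit types of $V$ under $O(m,n)$ (the zero vector, timelike, spacelike, and non-zero null vectors), yielding a bijection with the disjoint union of the adjoint orbits of $\mathfrak{so}(m,n)$, of $\mathfrak{so}(m-1,n)\times\mathbb{R}^{>0}_t$, of $\mathfrak{so}(m,n-1)\times\mathbb{R}^{>0}_s$, and of $\mathfrak{se}(m-1,n-1)$. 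The first three of these contribute adjoint orbits of $O(m,n)$, $O(m-1,n)\times\mathbb{R}^{>0}_t$, and $O(m,n-1)\times\mathbb{R}^{>0}_s$ directly (these are the $k=0$ terms in the claimed list); the fourth is the recursive term, since by Theorem~\ref{poinc_bij} its adjoint orbits are in bijection with the $O(m-1,n-1)$-orbits through $\Delta_{m-1,n-1}$.

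First I would set up the induction on $\min\{m,n\}$. The inductive hypothesis applied to $\Delta_{m-1,n-1}$ yields adjoint orbits of $O(m-1-j,n-1-j)$ for $0\le j\le\min\{m-1,n-1\}$, of $O(m-2-j,n-1-j)\times\mathbb{R}^{>0}_t$ for $0\le j\le\min\{m-2,n-1\}$, and of $O(m-1-j,n-2-j)\times\mathbb{R}^{>0}_s$ for $0\le j\le\min\{m-1,n-2\}$. Reindexing by $k=j+1$ absorbs these three families into the tails ($k\ge1$) of the three families asserted for $\Delta_{m,n}$, and the $k=0$ contributions come from the three non-recursive terms identified above. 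I would check that the index ranges match exactly under the shift $k=j+1$: the recursive $O(m-1-j,n-1-j)$ family runs over $0\le j\le\min\{m-1,n-1\}$, i.e. $1\le k\le\min\{m,n\}$, which together with $k=0$ gives precisely $0\le k\le\min\{m,n\}$, and similarly for the two boosted families.

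The base case is where $\min\{m,n\}=0$, say $n=0$ (the case $m=0$ is symmetric), so that $G=E(m,0)$ is a Euclidean group and there are no non-zero null vectors; the $\Delta_{m-1,n-1}$ term disappears. Here $\Delta_{m,0}$ decomposes into only the $\mathfrak{so}(m,0)$, the boosted $\mathfrak{so}(m-1,0)\times\mathbb{R}^{>0}_t$, and — since $V=\mathbb{R}^m$ with positive-definite $Q$ has no spacelike vectors — no spacelike contribution, matching the claimed list with all ranges collapsing to $k=0$ and the $s$-family empty.

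The main obstacle I anticipate is purely bookkeeping rather than conceptual: verifying that the index ranges and the signatures of the orthogonal factors line up correctly after the shift $k\mapsto k+1$, especially near the boundary where $\min\{m,n\}$, $\min\{m-1,n\}$, and $\min\{m,n-1\}$ differ by one and where the boosted families terminate. One must be careful that the truncation of the recursion (which halts precisely when one of the signature entries reaches zero and the null-vector stabiliser degenerates) produces exactly the stated upper limits $\min\{m-1,n\}$ and $\min\{m,n-1\}$ for the two $\mathbb{R}^{>0}$-families, and does not over- or under-count at the extreme value of $k$. Beyond this index arithmetic, everything follows formally from Theorem~\ref{poinc_bij} and the decomposition of $\Delta_{m,n}$ already in hand.
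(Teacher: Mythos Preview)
Your proposal is correct and follows essentially the same route as the paper: the paper establishes the one-step decomposition of \(\Delta_{m,n}\) into the three direct contributions plus the recursive term \(\Delta_{m-1,n-1}\) in the paragraph preceding the theorem, and then simply asserts that iterating this yields the stated hierarchy (illustrated in Figure~\ref{poinc_tree}). Your write-up makes the induction and the index shift \(k=j+1\) explicit, including the base case \(\min\{m,n\}=0\), which the paper leaves implicit in the picture; but the underlying argument is identical.
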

\begin{figure}
	\centering
	\begin{tikzcd}
		& \Delta_{m,n}\arrow[ld]\arrow[rd]\arrow[d]\arrow[rdd] &  &  \\
		\mathfrak{so}(m,n) & \mathfrak{so}(m-1,n)\times\mathbb{R}^{>0}_t & \mathfrak{so}(m,n-1)\times\mathbb{R}^{>0}_s &  \\
		&  & \Delta_{m-1,n-1} \arrow[ld,dash,dashed]\arrow[rd,dash,dashed]\arrow[d,dash,dashed]\arrow[rdd,dashed,dash]&  \\
		& ~& ~ & ~ \\
		&  &  & \Delta_{m-n,0}\arrow[ld]\arrow[d] \\
		&  & \mathfrak{so}(m-n)^* & \mathfrak{so}(m-n-1)^*\times\mathbb{R}^{>0}_t \\
	\end{tikzcd}
\caption{\label{poinc_tree}Hierarchy of orbit types for \(E(m,n)\) with \(m>n\).}
\end{figure}
\begin{ex}[Orbit types for $E(1,3)$]
	The set of orbits in \(\Delta_{1,3}\) for \(G=E(1,3)\) is in bijection with the set of adjoint orbits of \(O(1,3)\), \(O(0,3)\times\mathbb{R}^{>0}_t\), \(O(1,2)\times\mathbb{R}^{>0}_s\), \(O(0,2)\), and \(O(0,1)\times\mathbb{R}^{>0}_s\). We may find explicit normal forms for these orbits.
	
	\begin{enumerate}
		\item[\textendash] The group \(O(1,3)\) is isomorphic to \(SL(2;\mathbb{C})\) viewed as a real Lie group. The adjoint orbits of this group are equal to the sets of \(2\times 2\) traceless, complex matrices \(\xi\) with a fixed non-zero determinant \(\zeta=\det\xi\), along with the origin \(\xi=0\), and the nilpotent orbit through the nilpotent Jordan block \(\xi=N_2\).
		
		\item[\textendash] The adjoint orbits of \(O(3)\) are spheres parametrised by their radius \(\rho\ge 0\).
		
		\item[\textendash] The adjoint representation of \(O(1,2)\) is isomorphic to the vector representation on \(\mathbb{R}^{1+2}\). The orbits are equal to the sets of \(v\in\mathbb{R}^{1+2}\) with \(Q(v,v)=c\) for \(c\ne0\), along with the origin \(v=0\), and the set of non-zero null vectors.
		
		\item[\textendash] The adjoint orbit of \(O(2)\) through \(x\in\mathfrak{so}(2)\cong\mathbb{R}\) is equal to the set \(\{x,-x\}\). The orbits are therefore parametrised by \(x\ge 0\).
		
		\item[\textendash] The group \(O(1)\) has a single point orbit.
	\end{enumerate}
	We enumerate a list of orbit types for \(E(1,3)\) demonstrated in Figure~\ref{e13_forms}. We count fourteen orbit types collected into five groups. This coincides with that given in \cite{cush06} in Tables 3 and 6.
	\begin{figure}[h]
		\[
		\renewcommand\arraystretch{1.1}
		\begin{array}{l|l}
		&\text{Adjoint orbit normal forms} \\ \hline O(1,3) & \zeta=x+iy=\det\xi\in\mathbb{C}\setminus\{0\} \\
		& \zeta=0 \\
		& \xi= N_2 \\ \hline
		O(3)\times\mathbb{R}^{>0}_t & (t,\rho),~t>0,~\rho>0 \\
		& (t,0),~t>0,~\rho=0 \\ \hline
		O(1,2)\times\mathbb{R}^{>0}_s & (s,c),~s>0,~c=Q(v,v)\in\mathbb{R}\setminus\{0\} \\
		& (s,0),~s>0,~c=0 \\
		& s>0,\text{ for $v$ a non-zero null vector in $\mathbb{R}^{1+2}$} \\ \hline
		O(2) & x>0 \\ 
		& x=0 \\ \hline 
		O(1)\times\mathbb{R}^{>0}_s & s>0
		\end{array}
		\]
		\caption{\label{e13_forms}Orbit types for \(E(1,3)\).}
	\end{figure}
\end{ex}

\section{A homotopy equivalence between orbits} 
\subsection{Showing bijected orbits are homotopy equivalent}
Consider two bijected orbits \(\orb\subset\mathfrak{g}\) and \(\orb^*\subset\mathfrak{g}^*\). Recall that they are both affine bundles over their corresponding orbits \(X\subset\Sigma\) and \(Y\subset\Pi\) respectively and hence each share the same homotopy type as them. In Theorem~\ref{orb_bijection} the orbit bijection is established using an intermediate orbit \(Z\) in \(\Delta\) to which both \(X\) and \(Y\) correspond. Our strategy will be to show that both \(X\) and \(Y\) are homotopy equivalent to \(Z\) and therefore so too are \(\orb\) and \(\orb^*\).

 For a group \(H\) suppose we have a finite collection of \(H\)-spaces together with \(H\)-equivariant bundle maps connecting them in the sense below.

\begin{equation}\label{zigzag_property}
\begin{tikzcd}
E_1\arrow[rd] &  & E_2\arrow[ld]\arrow[rd,dash,dashed] &  & E_{n+1}\arrow[ld]& \\~  & F_1 &  & F_n &  
\end{tikzcd}
\end{equation}
If the fibres of all these bundles are contractible then we will say that any two of these spaces are \emph{zigzag related}. There are now two things to note: that being zigzag-related is an equivalence relation on \(H\)-spaces; and, that being zigzag related also means that the two spaces have the same homotopy type.

Now let \(B\) be a \(G\)-space and for some \(b\in B\) let \(H\) denote the isotropy subgroup \(G_b\). The map \(G\rightarrow B\) given by sending \(g\) to \(gb\) defines a principal \(H\)-bundle over \(B\). Now suppose that \(E\) is some \(H\)-space and recall the definition of the associated fibre bundle \(B_E\coloneqq(G\times E)/H\); a bundle over \(B\) with fibre \(E\). This is given by the group quotient of \(G\times E\) with respect to the \(H\)-action \(h(g,x)=(gh,h^{-1}x)\). The space \(B_E\) also inherits a transitive \(G\)-action given by \(\tilde{g}[(g,x)]=[(\tilde{g}g,x)]\) which commutes with the bundle projection \(B_E\rightarrow B\).
\begin{propn}\label{bundle_lemma}
	Let \(X\) be a \(G\)-space together with a \(G\)-equivariant bundle map \(X\rightarrow B\) with fibre \(F\) above \(b\). Suppose there is an \(H\)-equivariant bundle map \(\phi\colon E\longrightarrow F\) with fibre \(D\). Then there is a \(G\)-equivariant bundle map \(B_E\rightarrow X\) with fibre \(D\) such that the following diagram commutes:
	\begin{equation}
	\begin{tikzcd}
	& B_E\arrow[d]\arrow[ld] \\ X \arrow[r] & B
	\end{tikzcd}
	\end{equation}
\end{propn}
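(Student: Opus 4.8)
The plan is to reduce the statement to the functoriality of the associated-bundle construction, exploiting that $B$ is a homogeneous space. Since the orbit map $G\to B$, $g\mapsto gb$, is a principal $H$-bundle, the group $G$ acts transitively on $B$ with $H=G_b$; consequently any $G$-equivariant bundle over $B$ is completely determined by the $H$-space given by its fibre over $b$. The key step is therefore to first identify $X$ with the associated bundle $B_F=(G\times F)/H$ built from the fibre $F$ of the given map $X\to B$ over $b$.

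To do this I would define $\Psi\colon B_F\longrightarrow X$ by $[(g,f)]\mapsto g\cdot f$, where $f$ ranges over the fibre $F\subset X$ above $b$ (which $H$ preserves, since $H$ fixes $b$). One checks that $\Psi$ is well defined, since the defining $H$-relation sends $[(gh,h^{-1}f)]$ to $gh\cdot h^{-1}f=g\cdot f$; that it is $G$-equivariant for the two transitive $G$-actions; and that it commutes with the projections onto $B$. Bijectivity is a short argument using transitivity of $G$ on $B$: surjectivity holds because every point of $X$ lies over some $gb$ and can be translated back into $F$, and injectivity holds by absorbing the resulting ambiguity into the $H$-quotient. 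That $\Psi$ is an isomorphism of bundles, that is, continuity of its inverse, follows from local triviality of the principal bundle $G\to B$; this is the standard classification of $G$-equivariant bundles over a homogeneous space, and in the smooth homogeneous setting at hand it is automatic.

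With the identification $X\cong B_F$ in hand, the remainder is formal. The $H$-equivariant map $\phi\colon E\to F$ induces $\mathrm{id}_G\times\phi\colon G\times E\to G\times F$, which intertwines the two twisted $H$-actions $h(g,x)=(gh,h^{-1}x)$ and hence descends to a map $B_\phi\colon B_E\to B_F$, $[(g,x)]\mapsto[(g,\phi(x))]$. This $B_\phi$ is $G$-equivariant and lies over $B$. Composing, $\Psi\circ B_\phi\colon B_E\to X$ is the sought $G$-equivariant bundle map, and since both factors live over $B$ the required triangle commutes. Finally, restricting the composite to the fibre over $b$ recovers exactly $\phi\colon E\to F$, whose fibre is $D$; by $G$-equivariance together with transitivity on $B$, every fibre of $B_E\to X$ is then a copy of $D$, giving the claimed fibre.

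The only genuine obstacle is the identification $X\cong B_F$ of the second paragraph, namely that a $G$-equivariant fibre bundle over the homogeneous space $B=G/H$ is canonically the associated bundle of its fibre at $b$. The construction of $B_\phi$ and the verification of the commuting triangle are pure functoriality and a diagram chase. Within the first step, the one point deserving care is the continuity of $\Psi^{-1}$, which is precisely where local triviality of $G\to B$ enters; everything else is routine.
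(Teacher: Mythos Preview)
Your proof is correct and ultimately constructs the same map as the paper, namely $[(g,x)]\mapsto g\cdot\phi(x)$, but you organise it differently. The paper's argument is three sentences: fix $x_0\in E$, note that every point of $B_E$ can be written as $[(g,x_0)]$ (using that $G$ acts transitively on $B_E$, which in turn requires $H$ to act transitively on $E$ --- tacit here, but true in all the paper's applications since $E$ is always an orbit), and send this class to $g\phi(x_0)$. You instead first establish the identification $X\cong B_F$ --- which the paper records only afterwards, as a \emph{corollary} of this proposition --- and then obtain the desired map by functoriality, as $\Psi\circ B_\phi$. Your route is slightly more general (it does not need $E$ to be a single $H$-orbit) and is more careful about well-definedness and continuity of the inverse; the paper's route is quicker and simply exploits the homogeneity that is present throughout anyway.
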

\begin{proof}
	Fix some \(x_0\in E\) and observe that any point in \(B_E\) may be represented by a class of the form \([(g,x_0)]\). The bundle map in question is given by sending \([(g,x_0)]\) to \(g\phi(x_0)\). This is readily seen to be well defined and \(G\)-equivariant.
\end{proof}
\begin{cor}\label{unique_bundle}
	Any \(G\)-space \(X\) with equivariant fibre bundle \(X\longrightarrow B\) and fibres \(H\)-equivariantly diffeomorphic to \(E\) is itself \(G\)-equivariantly diffeomorphic to the associated bundle \(B_E\).
\end{cor}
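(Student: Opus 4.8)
The plan is to read Corollary~\ref{unique_bundle} as the special case of Proposition~\ref{bundle_lemma} in which the $H$-equivariant map $\phi$ is a diffeomorphism, and then to upgrade the bundle map produced there to a genuine diffeomorphism. Concretely, I would let $\phi\colon E\to F$ be the given $H$-equivariant diffeomorphism identifying $E$ with the fibre $F=X_b$ of $X\to B$ over $b$. Its fibre $D$ is then a single point, so Proposition~\ref{bundle_lemma} immediately hands us a $G$-equivariant bundle map $\Psi\colon B_E\to X$ with point fibres, sitting in the commuting triangle over $B$. All the content beyond this is in showing that $\Psi$ is invertible with smooth inverse.

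Unwinding the construction in the proof of Proposition~\ref{bundle_lemma}, the map is $\Psi([(g,x)])=g\,\phi(x)$, which is well defined precisely because $\phi$ intertwines the $H$-actions. Since $\Psi$ covers the identity on $B$ and restricts over the base point to the diffeomorphism $\phi\colon E\xrightarrow{\cong}F$, and since $G$ acts transitively on $B=G\cdot b$, the restriction of $\Psi$ to the fibre over any $b'=gb$ is conjugate to $\phi$ and hence a diffeomorphism. Thus $\Psi$ is a fibrewise diffeomorphism of the two locally trivial bundles $B_E\to B$ and $X\to B$, which is the heart of the claim.

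To make the inverse explicit, and thereby confirm smoothness without appealing to generalities, I would define $\Xi\colon X\to B_E$ by sending $y\in X$, with image $b'=gb$ under the projection $X\to B$, to $\Xi(y)=[(g,\phi^{-1}(g^{-1}y))]$; here $g^{-1}y$ lies in $F=X_b$, so $\phi^{-1}(g^{-1}y)$ makes sense. The $H$-equivariance of $\phi^{-1}$ shows this is independent of the choice of $g$ with $gb=b'$, so $\Xi$ is well defined, it is manifestly $G$-equivariant, and it is smooth because one may choose a smooth local section of the principal $H$-bundle $G\to B$ to produce $g$ locally in $y$. A direct computation then gives $\Xi\circ\Psi=\mathrm{id}_{B_E}$ and $\Psi\circ\Xi=\mathrm{id}_X$.

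The only substantive point, and therefore the main obstacle, is exactly this last one: confirming that the fibrewise bijection $\Psi$ is a diffeomorphism rather than merely a continuous equivariant bijection. Everything else is a formal specialisation of Proposition~\ref{bundle_lemma}. I expect this obstacle to be dispatched either by the explicit inverse $\Xi$ above, or, more structurally, by invoking local triviality of both bundles over the homogeneous base $B$ together with $G$-equivariance, which reduces the question to the single fibre over $b$, where $\Psi$ is by construction the diffeomorphism $\phi$.
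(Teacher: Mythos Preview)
Your proposal is correct and is exactly the approach implicit in the paper: the corollary is stated without proof, as an immediate specialisation of Proposition~\ref{bundle_lemma} to the case where $\phi\colon E\to F$ is a diffeomorphism (so the fibre $D$ is a point). You have simply filled in the details the paper omits, including the explicit inverse and the smoothness check via local sections of $G\to B$, all of which are sound.
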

\begin{lem}[Zigzag Lemma]\label{zigzag_lemma}
	Consider a collection of \(H\)-spaces as in equation~\eqref{zigzag_property} which are zigzag related. Then the corresponding associated fibre bundles over \(B\) are also zigzag related.
\end{lem}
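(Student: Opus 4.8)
The plan is to reduce the lemma to a single application of the associated-bundle functoriality established in Proposition~\ref{bundle_lemma} and its Corollary~\ref{unique_bundle}. Recall the setup: we have $H$-spaces $E_1,\dots,E_{n+1}$ and $F_1,\dots,F_n$ arranged in the zigzag of equation~\eqref{zigzag_property}, where each $E_i\to F_{i-1}$ and $E_i\to F_i$ is an $H$-equivariant bundle map with contractible fibres. For a fixed $G$-space $B$ with isotropy $H=G_b$, I want to show that the associated bundles $B_{E_1},\dots,B_{E_{n+1}}$ and $B_{F_1},\dots,B_{F_n}$ are again zigzag related, i.e.\ that they fit into a diagram of the same shape with contractible fibres. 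The natural candidate is simply to apply the associated-bundle construction to every object and every arrow in \eqref{zigzag_property}.

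First I would observe that the associated-bundle assignment $E\mapsto B_E=(G\times E)/H$ is functorial for $H$-equivariant maps: given an $H$-equivariant map $\psi\colon E\to F$, the rule $[(g,x)]\mapsto[(g,\psi(x))]$ is well defined (it respects the $H$-relation $h(g,x)=(gh,h^{-1}x)$ precisely because $\psi$ is $H$-equivariant) and $G$-equivariant, so it descends to a map $B_E\to B_F$ commuting with the projections to $B$. Applying this to each of the $2n$ bundle maps in \eqref{zigzag_property} produces a diagram of exactly the same zigzag shape with the $B_{E_i}$ in the top row and the $B_{F_j}$ in the bottom row.

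The key step is then to identify the fibres of these new maps and check they are contractible. Here I would invoke Corollary~\ref{unique_bundle}: if $\psi\colon E\to F$ is an $H$-equivariant bundle with fibre $D$ over a chosen basepoint, then $E$ is $H$-equivariantly the associated bundle $F_D^{\,\prime}$ built over $F$ from the fibre $D$, and by Proposition~\ref{bundle_lemma} (with the roles adjusted) the induced map $B_E\to B_F$ is itself a bundle whose fibre is again $D$. Thus the fibre of $B_{E_i}\to B_{F_j}$ is the same $D$ that was the fibre of $E_i\to F_j$. Since the original zigzag has contractible fibres by hypothesis, each induced map $B_{E_i}\to B_{F_j}$ also has contractible fibre, which is exactly what the definition of zigzag related demands. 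Hence $B_{E_1},\dots,B_{E_{n+1}},B_{F_1},\dots,B_{F_n}$ are zigzag related.

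I expect the main obstacle to be the bookkeeping in the fibre-identification step rather than anything conceptually deep: one must be careful that the fibre of the associated bundle $B_E\to B$ is $E$ (not something twisted), that composing the two projections $B_E\to B_F\to B$ agrees with $B_E\to B$, and that Proposition~\ref{bundle_lemma} genuinely yields fibre $D$ for the comparison map $B_E\to B_F$ and not merely a bundle over $B_F$ with uncontrolled fibre. Once the functoriality and the fibre-preservation of $E\mapsto B_E$ are pinned down, the result is immediate, so I would state these two facts cleanly first and then let the whole zigzag transport through the functor in one stroke.
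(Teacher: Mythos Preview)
Your proposal is correct and follows essentially the same route as the paper: apply the associated-bundle construction to each arrow of the zigzag and observe that the fibres are unchanged. The only difference is cosmetic---the paper invokes Proposition~\ref{bundle_lemma} directly (with $X=B_{F_j}$), which already yields both the map $B_{E_i}\to B_{F_j}$ and the identification of its fibre as $D$ in one stroke, so your separate detour through Corollary~\ref{unique_bundle} to pin down the fibre is unnecessary.
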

\begin{proof}
	A direct application of Proposition~\ref{bundle_lemma} shows that we may lift the zigzag of bundle maps in equation~\eqref{zigzag_property} to
	\begin{equation}
	\begin{tikzcd}
	B_{E_1}\arrow[rd] &  & B_{E_2}\arrow[ld]\arrow[rd,dash,dashed] &  & B_{E_{n+1}}\arrow[ld]& \\~  & B_{F_1} &  & B_{F_n} &  
	\end{tikzcd}
	\end{equation}
	whose fibres are all contractible.
\end{proof}
\begin{thm}[Homotopy-type preserving bijection]\label{homotopic_bij}
	In addition to the hypotheses in Theorem~\ref{orb_bijection}, suppose further that the bijected \(H_\omega\)- and \(H_p\)-orbits are zigzag related. Then the adjoint and coadjoint orbits of \(G\) which are in bijection with each other are also zigzag related; in particular, they are homotopy equivalent.
\end{thm}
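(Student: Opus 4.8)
The plan is to trace through the construction of the orbit bijection in Theorem~\ref{orb_bijection} and apply the Zigzag Lemma at each stage to promote the zigzag relations on the fibre orbits into zigzag relations on the full orbits. Recall that the bijected orbits $\orb$ and $\orb^*$ are affine bundles over $X\subset\Sigma$ and $Y\subset\Pi$ respectively, and that affine bundles have contractible fibres; hence $\orb$ is zigzag related to $X$ and $\orb^*$ is zigzag related to $Y$ via the single-arrow diagrams $\orb\rightarrow X$ and $\orb^*\rightarrow Y$. Since being zigzag related is an equivalence relation, it suffices to show that $X$, $Y$, and the intermediate orbit $Z\subset\Delta$ are all zigzag related to one another. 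The whole argument then reduces to showing that $X$ is zigzag related to $Z$, and that $Y$ is zigzag related to $Z$.

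First I would handle the pair $Y$ and $Z$, which correspond under the coadjoint bijection. Both are $H$-equivariant bundles over the \emph{same} orbit through $p\in V^*$: the fibre of $Y$ over $p$ is the coadjoint orbit $\orb^*_p\subset\mathfrak{h}_p^*$, while the fibre of $Z$ over $p$ is the adjoint orbit $\orb_p\subset\mathfrak{h}_p$ chosen to be in bijection with it. Taking $B$ to be the orbit through $p$, with isotropy subgroup $H_p$, Corollary~\ref{unique_bundle} identifies $Y$ and $Z$ with the associated bundles $B_{\orb^*_p}$ and $B_{\orb_p}$. By hypothesis the $H_p$-orbits $\orb_p$ and $\orb^*_p$ are zigzag related; the Zigzag Lemma (Lemma~\ref{zigzag_lemma}) then promotes this to a zigzag relation between $B_{\orb_p}$ and $B_{\orb^*_p}$, i.e.\ between $Y$ and $Z$.

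The pair $X$ and $Z$ is handled analogously, now using $B$ equal to the adjoint orbit through $\omega\in\mathfrak{h}$ with isotropy $H_\omega$. Here the fibre of $X$ over $\omega$ is an $H_\omega$-orbit in $(\ker\omega^*)^*$, the fibre of $Z$ over $\omega$ is the corresponding $H_\omega$-orbit in $\ker\omega^*$, and these are zigzag related by hypothesis; Corollary~\ref{unique_bundle} and Lemma~\ref{zigzag_lemma} again give a zigzag relation between $X$ and $Z$. Chaining the relations $\orb\sim X\sim Z\sim Y\sim\orb^*$ through transitivity of the zigzag relation yields that $\orb$ and $\orb^*$ are zigzag related, and in particular homotopy equivalent.

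The main obstacle I anticipate is bookkeeping rather than conceptual: one must check that the fibres appearing in these bundles really are $H_p$- (resp.\ $H_\omega$-) equivariantly diffeomorphic to the model $H_p$- (resp.\ $H_\omega$-) orbits, so that Corollary~\ref{unique_bundle} applies and $X$, $Y$, $Z$ are genuinely the associated bundles claimed. This requires the \emph{consistency} condition on the orbit bijections imposed in the proof of Theorem~\ref{orb_bijection}, which guarantees that the choice of fibre orbit is $H$-equivariant and so the associated-bundle identification is globally well defined rather than merely pointwise over $p$ (or $\omega$). Granting this identification, the remaining steps are formal consequences of the equivalence-relation structure of being zigzag related and of the Zigzag Lemma.
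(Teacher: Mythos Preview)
Your proposal is correct and follows essentially the same route as the paper: identify $X$, $Y$, and $Z$ as associated fibre bundles via Corollary~\ref{unique_bundle}, apply the Zigzag Lemma to promote the fibrewise zigzag relations to $X\sim Z$ and $Y\sim Z$, and then use the affine bundles $\orb\to X$ and $\orb^*\to Y$ together with transitivity to conclude $\orb\sim\orb^*$. Your discussion of the consistency condition is a useful elaboration on a point the paper leaves implicit.
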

\begin{proof}
	Let \(Z\) be an orbit in \(\Delta\) through some \((\omega,p)\), and \(X\) and \(Y\) the corresponding orbits in \(\Sigma\) and \(\Pi\) which are both in bijection with \(Z\). Both \(X\) and \(Z\) are \(H\)-equivariant bundles over the adjoint orbit \(H\cdot\omega\) through \(\omega\in\mathfrak{h}\) whose fibres are respectively given by bijected orbits in \(\ker\omega^*\) and its dual. By Corollary~\ref{unique_bundle}, both \(X\) and \(Y\) are associated fibre bundles to the principal bundle \(H\rightarrow H\cdot\omega\). As the fibres are assumed to be zigzag related, it follows from the Zigzag Lemma that \(X\) and \(Z\) are also zigzag related; let's write this as \(X\sim Z\). A similar argument with \(Y\) also establishes that \(Y\sim Z\). Now let \(\orb\) and \(\orb^*\) denote the adjoint and coadjoint orbits corresponding to \(X\) and \(Y\) respectively. As we have shown, \(\orb\) is an equivariant bundle over \(X\) with affine fibre \(\Imag\omega\), and \(\orb^*\) an equivariant bundle over \(Y\) with affine fibre \(\mathfrak{h}_p^\circ\). Therefore \(\orb\sim X\), \(\orb^*\sim Y\) and thus \(\orb\sim\orb^*\).
\end{proof}
\subsection{The case for the Poincar\'{e} group}

\begin{propn}
	For \(H=O(m,n)\) and some \(\omega\in\mathfrak{so}(n,m)\), the \(H_\omega\)-orbit bijection given in Proposition~\ref{ortho_flag_bij} has the property that two orbits in bijection with each other are zigzag related.
\end{propn}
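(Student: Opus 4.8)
The plan is to realise each of the two bijected orbits as the total space of an $H_\omega$-equivariant fibre bundle over a common base, and then read off the zigzag relation directly. Recall from the discussion preceding Proposition~\ref{ortho_flag_bij} that a non-trivial $H_\omega$-orbit in $\ker\omega$ through a point $p\in E_j\setminus E_{j+1}$ contains the entire coset $p+E_{j+1}$, because any $\widetilde{p}$ in the non-zero class $[p]\in E_j/E_{j+1}$ lies in the same orbit. First I would use this to show that the linear projection $E_j\to E_j/E_{j+1}$ restricts to an $H_\omega$-equivariant surjection
\[
H_\omega\cdot p\longrightarrow O(E_j/E_{j+1};Q_j)\cdot[p]
\]
whose fibre over $[p]$ is exactly the coset $p+E_{j+1}$. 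Being an equivariant surjection of homogeneous $H_\omega$-spaces it is automatically a fibre bundle, and its fibre is an affine space, hence contractible. The same argument applied to the dual side produces an $H_\omega$-equivariant bundle $H_\omega\cdot\varphi(p)\to O(E_{j+1}^\circ/E_j^\circ;Q_j^*)\cdot[\varphi(p)]$ with contractible (affine-space) fibre $\varphi(p)+E_j^\circ$.

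Next I would identify the two base orbits. The map $\varphi$ of \eqref{varphi_map} descends to an isomorphism $\varphi\colon E_j/E_{j+1}\to E_{j+1}^\circ/E_j^\circ$ with $\varphi^*Q_j^*=Q_j$; since $\Phi(H_\omega)$ preserves $Q_j$, a short computation shows $\varphi$ is $H_\omega$-equivariant, so it restricts to an $H_\omega$-equivariant diffeomorphism of $O(E_j/E_{j+1};Q_j)\cdot[p]$ onto $O(E_{j+1}^\circ/E_j^\circ;Q_j^*)\cdot[\varphi(p)]$. Composing the second bundle with $\varphi^{-1}$ therefore places both orbits in a single tent over the common base,
\begin{center}
\begin{tikzcd}
H_\omega\cdot p\arrow[rd] &  & H_\omega\cdot\varphi(p)\arrow[ld] \\
 & O(E_j/E_{j+1};Q_j)\cdot[p] &
\end{tikzcd}
\end{center}
with both legs having contractible fibres. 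This is exactly the configuration of \eqref{zigzag_property}, so by the Zigzag Lemma (Lemma~\ref{zigzag_lemma}), or rather its underlying definition, the two non-trivial orbits are zigzag related. The remaining pair of origins is handled trivially, each being a single point joined by the identity map, a bundle with point fibre.

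The routine checks are that the restricted projections are genuinely locally trivial bundles and that $\varphi$ intertwines the two orthogonal group actions; the former follows from homogeneity and the latter from $\varphi^*Q_j^*=Q_j$. The only genuinely delicate point is the contractibility of the fibres, and this is secured entirely by the earlier observation that each orbit swallows the full coset $p+E_{j+1}$, so that the fibre is a whole affine subspace rather than some proper, possibly disconnected, subset of it. I expect this to be where the argument really hinges; once it is in hand, the zigzag assembles formally and the conclusion is immediate.
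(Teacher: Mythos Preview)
Your argument is correct and matches the paper's own proof essentially line for line: both orbits are realised as equivariant bundles with affine (hence contractible) fibres over the orbits through $[p]$ and $[\varphi(p)]$, and the $H_\omega$-equivariance of $\varphi$ (which follows from the invariance of $Q_j$) identifies the two bases, yielding the desired zigzag. The paper states it slightly more tersely, but the content is identical.
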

\begin{proof}
	For when the orbit in question is the origin, this is clear. Consider then the orbit through a non-zero \(p\in\ker\omega\) contained to a set \(E_j\setminus E_{j+1}\) with respect to the flag given in \eqref{distinct_flag} for \(0\le j\le k\), and the corresponding bijected orbit through \(\varphi(p)\in E_{j+1}^\circ\setminus E_j^\circ\). There is an equivariant bundle map from the orbit through \(p\) to the orbit of \(O(E_j/E_{j+1};Q_j)\) through \([p]\) whose fibres are translates of \(E_{j+1}\); thus the fibres are contractible. The orbit through \(\varphi(p)\) is likewise a bundle over the \(O(E_{j+1}^\circ/E_j^\circ;Q^*_j)\)-orbit through \([\varphi(p)]\) with contractible fibres equal to translates of \(E_j^\circ\). Since the group \(H_\omega\) preserves the form \(Q_j\), the isomorphism \(\varphi\colon E_{j}/E_{j+1}\rightarrow E^\circ_{j+1}/E^\circ_{j}\) is equivariant with respect to \(H_\omega\), and therefore the orbits through \([p]\) and \([\varphi(p)]\) are \(H_\omega\)-equivariantly diffeomorphic via the map \(\varphi\). Thus the orbits through \(p\) and \(\varphi(p)\) are zigzag related.
\end{proof}

\begin{thm}
	For \(G=E(m,n)\), consider the orbit bijection given in Theorem~\ref{poinc_bij}. Take an adjoint and coadjoint orbit both in bijection with each other (via a bijected orbit in \(\Delta\)). These two orbits are zigzag related; in particular, bijected adjoint and coadjoint orbits are homotopy equivalent.
\end{thm}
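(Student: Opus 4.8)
The plan is to invoke Theorem~\ref{homotopic_bij} and thereby reduce the claim to verifying its two hypotheses for $G=E(m,n)$. That theorem asserts precisely what we want—that bijected adjoint and coadjoint orbits of $G$ are zigzag related (hence homotopy equivalent)—provided that (i) the hypotheses of Theorem~\ref{orb_bijection} hold, and (ii) the bijected $H_\omega$- and $H_p$-orbits are themselves zigzag related. Hypothesis~(i) is already established for the Poincar\'{e} group in the proof of Theorem~\ref{poinc_bij}. So the work of this proof is entirely to confirm hypothesis~(ii): that for each relevant subgroup the bijected little-group and centralizer-group orbits are zigzag related.

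For the centralizer groups $H_\omega$ this is immediate: the preceding proposition states exactly that the $H_\omega$-orbit bijection of Proposition~\ref{ortho_flag_bij} carries orbits in bijection to orbits which are zigzag related. First I would cite this directly. It therefore remains to treat the little groups $H_p$, and here I expect to proceed by induction, paralleling the induction already used in Theorem~\ref{poinc_bij}. Recall from that proof that the little groups are, up to isomorphism, $O(m,n)$, $O(m-1,n)$, $O(m,n-1)$, and $E(m-1,n-1)$, depending on whether $p$ is zero, timelike, spacelike, or non-zero null.

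The three semisimple cases—$O(m,n)$, $O(m-1,n)$, $O(m,n-1)$—require care: there the adjoint and coadjoint representations are \emph{isomorphic}, so the bijected orbits are not merely homotopy equivalent but literally $H_p$-equivariantly diffeomorphic. A single equivariant diffeomorphism $E_1\to F_1$ (viewed as a bundle with a point as fibre, or as the identity-type zigzag $E_1\to F_1\leftarrow E_2$ with contractible—indeed trivial—fibres) exhibits them as zigzag related, since an equivariant diffeomorphism is a trivial instance of the zigzag relation in equation~\eqref{zigzag_property}. The remaining case $H_p\cong E(m-1,n-1)$ is handled by the inductive hypothesis: for this smaller Poincar\'{e} group the theorem is assumed, so its bijected adjoint and coadjoint orbits are already zigzag related. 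The base cases are the Euclidean groups $E(m,0)$, $E(0,n)$ (whose little groups are all semisimple, already covered) and $E(1,1)$, whose explicit orbit bijection in Example~\ref{poinc11} matches origins, and matches the evidently contractible full-plane-type orbits with point orbits—so zigzag relatedness there can be checked by hand.

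\textbf{The main obstacle} I anticipate is not any single hard computation but rather assembling the induction cleanly: one must ensure that the ``consistency'' conditions on the $H_p$-bijections demanded in Theorem~\ref{orb_bijection} are compatible with zigzag relatedness being preserved under the inductive step, and that the $E(1,1)$ base case genuinely exhibits the zigzag structure (contractible fibres) rather than merely a homotopy equivalence. Once both hypotheses of Theorem~\ref{homotopic_bij} are confirmed for every relevant subgroup, the conclusion follows immediately by applying that theorem.
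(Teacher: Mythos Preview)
Your proposal is correct and follows essentially the same route as the paper: invoke Theorem~\ref{homotopic_bij}, cite the preceding proposition for the $H_\omega$-orbits, then handle the $H_p$-orbits by noting that the semisimple little groups $O(m,n)$, $O(m-1,n)$, $O(m,n-1)$ have isomorphic adjoint and coadjoint representations (so bijected orbits are equivariantly diffeomorphic, hence trivially zigzag related), and treat $H_p\cong E(m-1,n-1)$ by induction with base cases $E(m,0)$, $E(0,n)$, and $E(1,1)$. The paper's proof is exactly this, stated more tersely.
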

\begin{proof}
	To apply Theorem~\ref{homotopic_bij} we need to show that bijected \(H_p\)- and \(H_\omega\)-orbits are zig-zag related. The proposition above demonstrates that this is true for the centralizer group orbits. It remains to show that it is true for the orbits of \(H_p\). From Proposition~\ref{sopq_stabilisers}, these groups are isomorphic to \(O(m,n)\), \(O(m-1,n)\), \(O(m,n-1)\) and \(E(m-1,n-1)\) for when \(p\) is zero, timelike, spacelike, and non-zero and null respectively (and for whenever the entries are non-negative). For the first three cases, these groups are semisimple, and thus the adjoint and coadjoint representations are isomorphic; consequently the trivial orbit bijection is equivariant and bijected orbits are zigzag related. Thus the theorem is true for when \(G\) is a Euclidean group \(E(m,0)\) or \(E(0,n)\). For when \(H_p\cong E(m-1,n-1)\) we proceed by induction, assuming that bijected orbits are zigzag related; our base cases being the groups \(E(m,0)\) and \(E(0,n)\), and \(E(1,1)\) which is verified from Example~\ref{poinc11}. \end{proof}
	\subsection{The case for the affine group}
	Frustratingly, Theorem~\ref{homotopic_bij} cannot be directly applied to the affine group without some modification. We will here explain the problem and briefly sketch its resolution. It can indeed be shown that bijected orbits are homotopic, however we shall be consciously light on the details, leaving a rigorous proof as an exercise for the interested reader.
	
	The attempted proof proceeds analogously to the case of the Poincar\'{e} group. Consider the \(H_\omega\)-orbit \(E_j\setminus E_{j+1}\) together with the corresponding orbit \(E_{j+1}^\circ\setminus E_j^\circ\) as given in \eqref{flag_bij}. Each of these orbits is an equivariant bundle with contractible fibres over the non-zero vector orbits of \(GL(E_j/E_{j+1})\) and \(GL(E_{j+1}^\circ/ E_j^\circ)\) respectively. The problem now lies with the fact that, with respect to the canonical isomorphism \(E_{j+1}^\circ/ E_j^\circ\cong(E_j/E_{j+1})^*\), these two orbits, although identical, are not equivariantly isomorphic. In particular, the bijected \(H_\omega\)-orbits are not in general zig-zag related.
	
	A remedy to this problem is to define a notion of being `pseudo-equivariant', whereby a map \(\varphi\) satisfies \(\varphi(rp)=r^{-T}\varphi(p)\) for all \(r\in GL(n)\). One then weakens the definition of being zigzag related in \eqref{zigzag_property} to allow pseudo-equivariant bundle maps between the spaces. After proving the zigzag lemma for this new weakened definition, the proof of Theorem~\ref{homotopic_bij} follows verbatim and may be applied to the affine group.

\section*{Conclusions}

To what extent we have found a geometric explanation for the orbit bijection found in \cite{cush06} is debatable. Although we have demonstrated a framework and strategy for proving such a result, the problem has now shifted into a similar bijection question concerning the little-groups and centralizer subgroups. 

In practice, a key step in establishing the orbit bijections for the affine and Poincar\'{e} groups was an induction argument which took advantage of the fact that the subgroups \(H_p\) were either reductive, or equal to an affine or Poincar\'{e} group defined on a space of lower dimension. Using the same idea, it is possible to prove the orbit bijection for other semidirect products, including the special and connected versions of the affine and Poincar\'{e} groups, and even the Galilean group. It is worth noting the limitations however of this inductive argument. Consider for example the semidirect product \[\text{Symp}(2n;\mathbb{R})\ltimes\mathbb{R}^{2n}\] of the symplectic group with its defining vector representation. The little subgroup \(\text{Symp}(2n;\mathbb{R})_p\) fixing a non-zero \(p\) is called the \emph{odd symplectic group}, and for \(n>1\) is isomorphic to the semidirect product \(\text{Symp}(2n-2;\mathbb{R})\ltimes H_{2n-2}\) \cite{odd_symp}. Here \(H_{2n-2}\) is the Heisenberg group corresponding to the symplectic vector space \(\mathbb{R}^{2n-2}\). For this example, our inductive argument no longer applies. However, fortunately the bijection result may still be rescued by realising that this odd symplectic group (which may be found in the literature alternatively by the names affine extended symplectic group or Schr\"{o}dinger group) is a one-dimensional central extension of the original semidirect product defined for a smaller dimension. As central elements are unaffected by the adjoint representation, the bijection result still holds for this example.

The obvious question is to ask: to what extent does such an orbit bijection result hold for other groups? This author, at the time of writing, has not encountered a single example of a group which does not exhibit a geometric bijection between the sets of adjoint and coadjoint orbits, together with the property that bijected orbits share the same homotopy type. It is tempting then to conjecture that perhaps this result is true, if not for all groups, but for a large class of groups. 

A next step could be to consider the general case of a Lie algebra \(\mathfrak{g}\) containing some ideal. In \cite{myk}, they generalise the bundle-of-little-group-orbits construction given in \cite{rawnsley} to any \(\mathfrak{g}\), and obtain an analogous classification of the coadjoint orbits of \(\mathfrak{g}\) with respect to a given ideal. It might then be possible to expand on this work, and derive a set analogous to our set \(\Delta\); the set of orbits through which might be shown to be in bijection with each of the sets of adjoint and coadjoint orbits of \(\mathfrak{g}\).
\bibliographystyle{alpha}
\bibliography{UpdatedPaper2} 

\newcommand{\Addresses}{{
		\bigskip
		\footnotesize
		
		P.~Arathoon, \textsc{School of Mathematics, University of Manchester,
			Manchester, M13 9PL, U.K.}\par\nopagebreak
		\textit{E-mail address},  \texttt{philip.arathoon@manchester.ac.uk}
		
%
%
%
		
}}

\Addresses

\end{document}